\def\graybox(#1,#2){
\x=#1 \y=#2
\z=\x \t=\y
\advance\z by 10
\advance\t by 10
\psframe[fillstyle=solid,fillcolor=lightgray,linewidth=0pt](\x,\y)(\z,\t)
\psline[linewidth=.5pt](\x,\y)(\x,\t)(\z,\t)(\z,\y)(\x,\y)}
\def\whitebox(#1,#2){
\x=#1 \y=#2
\z=\x \t=\y
\advance\z by 10
\advance\t by 10
\psframe[fillstyle=solid,fillcolor=white,linewidth=0pt](\x,\y)(\z,\t)
\psline[linewidth=.5pt](\x,\y)(\x,\t)(\z,\t)(\z,\y)(\x,\y)}
\newcommand{\define}{\textbf}
\newcommand{\excise}[1]{}
\newcommand{\isom}{\cong}
\renewcommand{\setminus}{\smallsetminus}
\renewcommand{\phi}{\varphi}
\newcommand{\exterior}{\textstyle\bigwedge}
\renewcommand{\tilde}{\widetilde}
\renewcommand{\hat}{\widehat}
\renewcommand{\bar}{\overline}
\newcommand{\C}{\mathbb{C}}
\renewcommand{\P}{\mathbb{P}}
\newcommand{\Z}{\mathbb{Z}}
\newcommand{\KK}{\mathcal{K}}
\newcommand{\OO}{\mathcal{O}}
\newcommand{\liea}{\mathfrak{a}}
\newcommand{\lieb}{\mathfrak{b}}
\newcommand{\lieg}{\mathfrak{g}}
\newcommand{\lieh}{\mathfrak{h}}
\newcommand{\lien}{\mathfrak{n}}
\newcommand{\liet}{\mathfrak{t}}
\newcommand{\univ}{\mathfrak{U}}
\newcommand{\liegl}{\mathfrak{gl}}
\newcommand{\liesl}{\mathfrak{sl}}
\newcommand{\liesp}{\mathfrak{sp}}
\newcommand{\lieso}{\mathfrak{so}}
\newcommand{\Gr}{{Gr}}
\newcommand{\QQ}{\mathcal{Q}}
\newcommand{\GGr}{\mathcal{G}r}
\newcommand{\pt}{\mathrm{pt}}
\newcommand{\htpt}{S}
\newcommand{\e}{\varepsilon}
\newcommand{\barf}{\bar{f}}
\newcommand{\ee}{\mathrm{e}}
\newcommand{\bx}{\footnotesize{\fbox{}}}
\newcommand{\spin}{\mathbb{S}}
\DeclareMathOperator{\codim}{codim}
\DeclareMathOperator{\Span}{span}
\DeclareMathOperator{\Sym}{Sym}
\DeclareMathOperator{\Pf}{Pf}
\renewcommand{\det}{\mathrm{det}}
\newtheorem{theorem}{Theorem}
\newtheorem{lemma}[theorem]{Lemma}
\newtheorem*{thm*}{Theorem}
\theoremstyle{definition}
\newtheorem{remark}[theorem]{Remark}
\newtheorem{example}[theorem]{Example}
\begin{document}

\title[Schubert calculus and the Satake correspondence]{Minuscule Schubert calculus and the geometric Satake correspondence}
\author{Dave Anderson}
\address{Department of Mathematics, The Ohio State University, Columbus, OH 43210}
\email{anderson.2804@math.osu.edu}
\author{Antonio Nigro}
\address{Universidade Federal Fluminense, Niter\'{o}i, RJ}
\email{antonio.nigro@gmail.com}
\keywords{geometric Satake correspondence, Schubert calculus, affine Grassmannian, Pfaffian, equivariant quantum cohomology}
\date{January 22, 2021}
\thanks{DA was partially supported by a postdoctoral fellowship from the Instituto Nacional de Matem\'atica Pura e Aplicada (IMPA) and by NSF Grant DMS-1502201.}

\begin{abstract}
We describe a relationship between work of Gatto, Laksov, and their collaborators on realizations of (generalized) Schubert calculus of Grassmannians, and the geometric Satake correspondence of Lusztig, Ginzburg, and Mirkovi\'c and Vilonen.  Along the way we obtain new proofs of equivariant Giambelli formulas for the ordinary and orthogonal Grassmannians, as well as a simple derivation of the ``rim-hook'' rule for computing in the equivariant quantum cohomology of the Grassmannian.
\end{abstract}

\maketitle

\section{Introduction}

The goal of this article is to illustrate connections between several circles of ideas in Schubert calculus, representation theory, and symmetric functions.  The driving force behind the connections we explore is the {\em geometric Satake correspondence}---which, in the special cases we examine, matches Schubert classes in the Grassmannian (and related spaces) with weight vectors in exterior products (and related representations).  We will focus especially on an equivariant version of this correspondence, where additional bases appear on both sides.  An analysis of the transition matrices for these bases leads us directly to well-known symmetric functions: the Schur $S$- and $P$-functions, along with their factorial generalizations.

The approach we take is primarily expository, at least in the sense that all the main theorems have appeared previously.  However, in describing a relationship among notions which do not often appear side by side, we obtain some novel consequences: new and simple proofs of the equivariant Giambelli formulas for ordinary and orthogonal Grassmannians, as well as of a rule for computing in the equivariant quantum cohomology of the Grassmannian.  This perspective should find further applications within Schubert calculus.  One theme we wish to emphasize is this: when some aspect of $H_T^*\Gr(k,n)$ appears related to exterior algebra, one can expect generalizations via the Satake correspondence, either to other minuscule spaces, or to other subvarieties of the affine Grassmannian.

Our starting point is a very simple observation, which must be quite old.  The Grassmannian $\Gr(k,n)$ has a decomposition into Schubert cells, indexed by $k$-element subsets of $[n] := \{1,\ldots,n\}$.  As $\C$-vector spaces, therefore, one has
\begin{equation}\label{e.obs1}
  H_*(\Gr(k,n),\C) = H^*(\Gr(k,n),\C) = \exterior^k \C^{n},
\end{equation}
and the cohomology ring $H^*(\Gr(k,n),\C)$ acts as a certain ring of operators on the exterior power.  The rest of this story is an attempt to add as much structure as possible to this identification.

As a first step, for $k=1$, let us identify the basis of linear subspaces of $\P^{n-1}$ with the standard basis of $\C^n$:
\[
  [\P^{n-i}] = \e_i \quad \text{ in }\quad H_{n-i}\P^{n-1} = H^{i-1}\P^{n-1}.
\]
(Since the spaces we consider have no odd-degree singular (co)homology, we will economize notation by always writing $H_i$ and $H^i$ for singular homology and cohomology in degree $2i$.)  This induces a grading on $\C^n$, and makes \eqref{e.obs1} an isomorphism of graded vector spaces.  More generally, writing $\Omega_I$ for the Schubert variety corresponding to $I = \{i_1 < \cdots < i_k\} \subseteq [n]$, we identify
\[
  [\Omega_I] = \e_I := \e_{i_1} \wedge \cdots \wedge \e_{i_k}
\]
in $H_*\Gr(k,n)=H^*\Gr(k,n)$.

To be a little more specific, since it will matter later, these are the {\it opposite} Schubert varieties: $\Omega_I$ is the closure of the cell $\Omega^\circ_I$ whose representing $k\times n$ matrices have pivots in columns $I$, with zeroes to the left of the pivots.  For example, in $\Gr(3,8)$ we have
\[
  \Omega^\circ_{\{2,4,7\}} = \left[\begin{array}{cccccccc}
  0 & 1 & * & 0 & * & * & 0 & * \\
  0 & 0 & 0 & 1 & * & * & 0 & * \\
  0 & 0 & 0 & 0 & 0 & 0 & 1 & *\end{array}\right].
\]
There is a standard bijection between $I \subseteq [n]$ and partitions $\lambda$ fitting inside the $k\times(n-k)$ rectangle; one puts
\[
  \lambda_{k+1-j} = i_j - j.
\]
Then, writing $\Omega_\lambda = \Omega_I$, the grading is realized by $\codim \Omega_\lambda = |\lambda| = \lambda_1+\cdots+\lambda_k$.  We will write $\sigma_\lambda$ for the (co)homology class $[\Omega_\lambda]$.

The essential idea is to exploit the isomorphism \eqref{e.obs1} and use linear algebra to develop the basic ingredients of Schubert calculus---including Giambelli and Pieri formulas.  This can be done using elementary (though still nontrivial) methods.

Again, the first step is to examine the easiest case, where $k=1$.  The action of the divisor class $\sigma_{\bx}$ translates into an operator $\xi$ on $\C^n$, given by
\[
  \xi\cdot \e_i = \begin{cases} \e_{i+1} & \text{ if } i<n; \\ 0 &\text{ if }i=n.\end{cases}
\]
So this has matrix
\begin{equation}\label{e.xi}
\xi = \left[\begin{array}{cccc}
0 &  &  &  \\
1 & \ddots &  &  \\
0 & \ddots & \ddots &  \\
0 & 0 & 1 & 0\end{array}\right]
\end{equation}
and one can view it as an element of the Lie algebra $\liegl_n$ (or $\liesl_n$).  This Lie algebra acts naturally on exterior powers $\exterior^k\C^n$, and the basic observation is that for any $k$, $\sigma_{\bx}$ acts on $H^*\Gr(k,n)$ (via cup product) just as the matrix $\xi$ acts on $\exterior^k\C^n$ (via the Lie algebra action).

For example, consider $\sigma_{1}$ acting on $H^*\Gr(2,n)$.  We have
\begin{align*}
  \xi\cdot \e_1\wedge\e_2 &= \e_2\wedge\e_2 + \e_1\wedge\e_3 \\
   & = \e_1\wedge\e_3,
\end{align*}
corresponding to $\sigma_{\bx} \cdot \sigma_\emptyset = \sigma_{(1)}$.  Similarly,
\begin{align*}
  \xi\cdot \e_1\wedge\e_3 &= \e_2\wedge\e_3 + \e_1\wedge\e_4,
\end{align*}
corresponding to $\sigma_{\bx}\cdot\sigma_{(1)} = \sigma_{(1,1)}+\sigma_{(2)}$.  Modelling the cohomology of $\Gr(2,n)$ using symmetric functions, the divisor class corresponds to the sum of variables $p_1$.  However, that the Lie algebra action of the matrix $\xi^2$ does not correspond to multiplication by $(\sigma_{\bx})^2$.  For instance, the Lie algebra action gives
\begin{align*}
 \xi^2 \cdot \e_2 \wedge \e_3 & = \e_4 \wedge \e_3 + \e_2 \wedge \e_5 \\
                             &= \e_2 \wedge \e_5 - \e_3 \wedge \e_4,
\end{align*}
which corresponds to the computation $p_2\cdot \sigma_{(1,1)} = \sigma_{(3,1)} - \sigma_{(2,2)}$, where $p_2$ is the {\em power sum} symmetric polynomial.  (In general, $\xi^k$ will correspond to multiplying by the sum of $k$th powers $p_k$, not $(p_1)^k$.)

So we see a correspondence between the action of a certain element on representations of a Lie algebra, and the multiplication by a divisor class in the cohomology of Grassmannians.  The question thus arises: are the above calculations merely coincidental?  In the next two sections, we will see that they are not, by viewing them as a shadow of the geometric Satake correspondence, a major construction in modern representation theory.

In \S\ref{s.giambelli}, we show how the equivariant Giambelli formula---which computes a Schubert class in $H_T^*\Gr(k,n)$ as a factorial Schur polynomial $s_\lambda(x|t)$---follows directly from the defining properties of the exterior power, putting computations of Gatto, Laksov, Thorup, and others into the general context of the Satake correspondence.  Having done this, it is natural to proceed to the minuscule spaces of type D: in \S\ref{s.spinors}, we apply similar methods to compute equivariant Schubert classes via the factorial Schur $P$-functions $P_\lambda(x|t)$, using computations on even-dimensional quadrics (carried out in \S\ref{s.quadrics}) in place of projective spaces.  The functions $s_\lambda(x|t)$ are frequently defined as a certain ``Jacobi-Trudi'' determinant, but they may also be written as a ratio of two determinants (as was originally done by Cauchy); similarly, $P_\lambda(x|t)$ may be written either as a Pfaffian or as a ratio of two Pfaffians.  A curious aspect of our arguments is that the ratio description of these functions appears naturally, in contrast to most geometric arguments (dating to Giambelli), where the Jacobi-Trudi formulation is used.

We turn to quantum cohomology in \S\ref{s.rim-hook}, where we give a short proof of the equivariant rim-hook rule for computing in $QH_T^*\Gr(k,n)$.  Here the Satake isomorphism serves only a psychological function, and is not logically necessary: the main point is that the combinatorial operation of removing a rim hook from a partition (and picking up a corresponding sign) is precisely that of reducing the indices of a pure wedge modulo $n$.

Most of these ideas have appeared in the work of other authors, at least in some form; as mentioned above, our primary aim is to indicate connections and extract a few new consequences.  We first learned of the possibility of a ``formal'' Schubert calculus on exterior powers from a series of papers by Gatto and Laksov-Thorup in the 2000s, and this point of view has been developed further by these authors and their collaborators \cite{gatto,gatto-salehyan1,gatto-salehyan,gs,laksov,lt1,lt,lt3}.   More detailed references are given throughout the article, and we  point to further connections in a closing section (\S\ref{s.closing}).

\medskip

{\it Acknowledgements.}  This note is partly based on a talk given by the first author at a conference dedicated to the memory of Dan Laksov (Mittag-Leffler, June 2014).  The ideas grew out of conversations we had with Roi Docampo at IMPA.  We learned about the connection between quantum Schubert calculus and the Satake correspondence from a remarkable preprint of Golyshev and Manivel \cite{gm}, and the debt we owe to their work should be evident.  We also thank Reimundo Heluani and Joel Kamnitzer for helping us understand the geometric Satake correspondence.  Finally, we thank the referees for a very careful reading and many thoughtful comments.

%

\section{The geometric Satake correspondence}

A second simple observation is the following: On one hand, the vector space $\exterior^k\C^n$ is a fundamental (and in fact, minuscule) representation $V_{\varpi_k}$ of $\liesl_n$.  On the other hand, $Gr(k,n) = PGL_n/P_{\varpi_k}$, where $P_{\varpi_k}$ is the parabolic corresponding\footnote{In general, a cocharacter $\varpi\colon \C^* \to T \subseteq G$ determines a reductive subgroup $G_\varpi\subseteq G$, the centralizer of its image; the corresponding parabolic $P_\varpi$ is generated by $G_\varpi$ together with the Borel.} to the cocharacter $\varpi_k$ of $PGL_n$.  In fact, if $V=\C^n$ is the standard representation of $\liesl_n$, it is most natural to regard $Gr(k,n) = Gr(k,V^*)$ as parametrizing $k$-planes in the dual vector space.

Work from the 1990's by Ginzburg \cite{ginzburg} and Mirkovi\'c and Vilonen \cite{mv}---which in turn builds on work of Lusztig \cite{lusztig} from the early 1980's---puts this into a more general context.  To describe it we need some terminology and basic facts.

Any reductive group $G$ with maximal torus $T$ comes with a root datum $R$.  Root data have a built-in duality, and exchanging $R$ with $R^\vee$ yields a {\em Langlands dual group} $G^\vee$.  The details will not be too important for now, beyond this: the roots of $G^\vee$ are the coroots of $G$, and the characters of the maximal torus $T^\vee \subseteq G^\vee$ are the co-characters of $T\subseteq G$.  For example, $(PGL_n)^\vee \isom SL_n$, $(Sp_{2n})^\vee \isom SO_{2n+1}$, $(PSO_{2n})^\vee = Spin_{2n}$, and $(GL_n)^\vee \isom GL_n$.  (A significant part of Ginzburg and Mirkovi\'c-Vilonen's program was to give a more intrinsic construction of $G^\vee$.)

Let $\KK = \C((z))$ and $\OO=\C[\![z]\!]$.  The {\em affine Grassmannian} of a complex reductive group $G$ is the infinite-dimensional orbit space
\[
  \GGr_G = G( \KK ) / G( \OO ),
\]
topologized as an ind-variety.  The essence of geometric Satake is to relate the geometry of $\GGr_{G}$ with the representation theory of $G^\vee$.  We will describe a very small part of this correspondence which suffices for our purposes.

The group $G(\OO)$ acts on $\GGr_{G}$ via left multiplication, and its orbits are naturally parametrized by {\it dominant co-characters} $\varpi\colon \C^*\to T$.  These, by duality, are the same as dominant characters of $T^\vee$.  Since there is a well-known indexing of irreducible representations of $G^\vee$ by dominant characters, we have a bijection of sets
\[
\begin{array}{ccc}
\left\{ G(\OO)\text{-orbit closures in } \GGr_{G} \right\} & \leftrightarrow & \left\{ \text{irreducible representations of } G^\vee \right\} \\
\overline{\GGr^\varpi}  & \leftrightarrow & V_\varpi
\end{array}
\]
and as before the goal is to endow this with more structure.

Let $\lieg^\vee=\mathrm{Lie}(G^\vee)$, and take a regular nilpotent element $\xi \in \lieg^\vee$.  (Up to conjugation, in $\liesl_n$ such an element is the matrix from \eqref{e.xi}.  More generally, one can write $\xi = \sum a_i E_{\alpha_i}$ as a sum of simple root vectors.)  Let $\liea \subseteq \lieg^\vee$ be the centralizer of $\xi$, an abelian Lie subalgebra of dimension equal to the rank of $\lieg$.  (In the case of $\xi\in\liesl_n$, this subalgebra is spanned by the matrix powers $\xi,\xi^2,\ldots,\xi^{n-1}$.)  Its universal enveloping algebra, denoted $\univ(\liea)$, acts naturally on any representation of $\liea$.  Since $\liea$ is abelian, $\univ(\liea)$ is just the polynomial algebra $\Sym_\C^*\liea$.

Finally, $IH_*X$ denotes the (middle-perversity) intersection homology of a space $X$, with coefficients in $\C$.  This is a graded vector space which exhibits Poincar\'e duality, and which comes with an action of $H^*Y$, for any $X \to Y$, via cap product.

\begin{theorem}[Geometric Satake \cite{ginzburg,mv}] \label{t.satake}
There are graded isomorphisms of algebras
\[
  H^*(\GGr_{G}^\circ) \isom \univ(\liea) = \Sym_\C^*\liea
\]
and vector spaces
\[
  IH_*(\overline{\GGr^\varpi}) \isom V_\varpi,
\]
for all dominant $\varpi$, and these isomorphisms are compatible with the natural actions of $H^*(\GGr_{G})$ on $IH_*(\overline{\GGr^\varpi})$ (via cap product) and of $\univ(\liea)$ on $V_\varpi$ (via the representation of $\lieg^\vee$).  Furthermore, there is a natural basis of {\it MV-cycles} in $IH_*(\overline{\GGr^\varpi})$ which corresponds to a weight basis of $V_\varpi$.
\end{theorem}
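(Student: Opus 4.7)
The plan is to build a symmetric monoidal equivalence
\[
  P_{G(\OO)}(\GGr_G) \simeq \mathrm{Rep}(G^\vee)
\]
between the $G(\OO)$-equivariant perverse sheaves on the affine Grassmannian and the finite-dimensional representations of a reductive group---which will be identified at the end with $G^\vee$---and then to read off the three stated isomorphisms from this equivalence and from its fiber functor.

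First I would equip $P_{G(\OO)}(\GGr_G)$ with the convolution product $\mathcal{F}*\mathcal{G} = m_*(\mathcal{F}\,\widetilde{\boxtimes}\,\mathcal{G})$, where $m\colon G(\KK)\times^{G(\OO)}\GGr_G\to\GGr_G$ is multiplication and $\widetilde{\boxtimes}$ is the external product descended along the $G(\OO)$-torsor. The perversity of $\mathcal{F}*\mathcal{G}$ reduces to stratified semi-smallness of $m$ over $G(\OO)$-orbit closures, which one checks by a dimension count in the affine Weyl group. I would then upgrade this to a symmetric tensor structure via the fusion product, using the Beilinson--Drinfeld Grassmannian $\GGr_{G,X^2}$ over a smooth curve: nearby cycles along the diagonal recover $*$, while the fiber over a point away from the diagonal is $\mathcal{F}\boxtimes\mathcal{G}$, and the $S_2$-action on $X^2$ supplies a commutativity constraint. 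Upgrading this constraint from braided to honestly symmetric---the delicate Mirkovi\'c--Vilonen sign calculation---is the hardest step of the proof.

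Next I would introduce the fiber functor $F(\mathcal{F}) = \bigoplus_\nu H^{|\nu|}_c(S_\nu,\mathcal{F})$, where $S_\nu$ ranges over the semi-infinite orbits of an opposite maximal unipotent. Braden's hyperbolic localization theorem shows that $F$ is exact and tensor, and the sum over $\nu$ automatically equips every $F(\mathcal{F})$ with an intrinsic weight grading. Tannakian reconstruction then yields an equivalence $P_{G(\OO)}(\GGr_G)\simeq \mathrm{Rep}(\tilde{G})$ for a reductive group $\tilde{G}$, which is identified with $G^\vee$ by matching root data against the minuscule orbit closures $\overline{\GGr^\varpi}$.

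From this equivalence, the isomorphism $IH_*(\overline{\GGr^\varpi})\isom V_\varpi$ is just the value $F(IC_\varpi)$ on the simple perverse sheaf supported on $\overline{\GGr^\varpi}$, and the MV basis is furnished by the classes of top-dimensional irreducible components of $\overline{S_\nu}\cap\overline{\GGr^\varpi}$, which are automatically labelled by the weight $\nu$. For the remaining algebra isomorphism $H^*(\GGr_G^\circ)\isom\univ(\liea)$, I would follow Ginzburg: cap product makes $H^*(\GGr_G^\circ)$ act on every object of the Satake category, embedding it as a central subalgebra of $\End(F) = \C[G^\vee]$, and Kostant's theorem on the principal nilpotent centralizer identifies this central subalgebra with $\Sym_\C^*\liea$. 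Compatibility of the cup-product action with the $\lieg^\vee$-action on $V_\varpi$ is built in by naturality of $F$.
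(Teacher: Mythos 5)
The paper does not prove Theorem~\ref{t.satake}: it is quoted as a background result with the proof deferred entirely to Ginzburg and Mirkovi\'c--Vilonen, and the text explicitly remarks that those references establish the far stronger tensor-equivalence of categories while ``we will not need this level of generality.''  So there is no internal proof to compare against.

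Taken on its own terms, your sketch is a faithful compression of the argument from the cited sources: convolution and stratified semi-smallness of $m$; fusion via the Beilinson--Drinfeld Grassmannian with the sign-modified commutativity constraint; the fiber functor decomposed by weight functors $H^*_c(S_\nu,-)$ along semi-infinite orbits, made exact and tensor via Braden's hyperbolic localization; Tannakian reconstruction; MV cycles as irreducible components of $\overline{S_\nu}\cap\overline{\GGr^\varpi}$; and Ginzburg's identification of $H^*(\GGr_G^\circ)$ with $\Sym^*_\C\liea$ via cap product and Kostant's description of the principal nilpotent centralizer.  Two phrasings would need tightening in a full write-up.  First, the Tannaka group $\tilde{G}$ is identified with $G^\vee$ by computing its entire root datum---weight lattice from the $\nu$-grading, simple roots from the convolution structure on minimal orbits---rather than by matching specifically against minuscule orbit closures.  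Second, $\End(F)\isom\C[G^\vee]$ is not literally correct (already for $G^\vee=\mathbb{G}_m$ the natural endomorphisms of the forgetful functor on graded vector spaces form $\prod_{n\in\Z}\C$, not Laurent polynomials); what the argument actually needs is that cap-product operators are natural endomorphisms of $F$ commuting with the $\lieg^\vee$-action, that the first Chern class of the determinant line acts as a principal nilpotent $\xi$, and hence that $H^*(\GGr_G^\circ)$ lands in the centralizer $\liea$ of $\xi$ inside $\lieg^\vee$, which Kostant identifies with a polynomial ring on $\rk(G)$ generators.
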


\noindent
The statements proved by Ginzburg and Mirkovi\'c-Vilonen are vastly stronger: they establish an equivalence of tensor categories between the category of $G(\OO)$-equivariant perverse sheaves and the representation category of $G^\vee$.  We will not need this level of generality, however.

The connected components of $\GGr_{G}$ are indexed by elements of $\pi_1(G)$, and in fact there is a natural group structure on the set of components.  In the statement of the theorem, $\GGr_{G}^\circ$ means the identity component, although in fact all components are isomorphic---as spaces, but not compatibly with the left $G(\OO)$-action.

In general, the orbit closure $\overline{\GGr^\varpi}$ is singular, hence the appearance of intersection homology.  However, the minimal orbit in each connected component of $\GGr_{G}$ is closed, so such $\overline{\GGr^\varpi}=\GGr^\varpi$ are smooth, and one has $IH_*=H_*=H^*$.  When $G$ is adjoint (so $G^\vee$ is simply connected), these minimal orbits correspond to the {\it minuscule weights} of $G^\vee$.  For a minuscule weight $\varpi$, one has $\GGr^\varpi = G/P_\varpi$.  Furthermore, in this case the MV-cycles are precisely the Schubert varieties in $G/P_\varpi$ (as noted in \cite[\S1]{kamnitzer}).

\begin{example}
The minuscule weights of $\liesl_n$ are $0,\varpi_1,\ldots,\varpi_{n-1}$, corresponding to the $n$ elements of $\pi_1(PGL_n) = \Z/n\Z$.  The representations are the exterior powers $\exterior^k\C^n$, and the orbits are the Grassmannians $PGL_n/P_{\varpi_k} = \Gr(k,n)$, for $0\leq k\leq n-1$.

The minuscule weights of $\lieso_{2n}$ are $0,\varpi_1,\varpi_{n-1},\varpi_n$, where the nonzero ones are the fundamental weights corresponding to the three end-nodes of the $D_n$ Dynkin diagram.  The representations are the standard one, $V_{\varpi_1} = \C^{2n}$, and the half-spin representations, $V_{\varpi_{n-1}}=\spin_n^+$ and $V_{\varpi_n}=\spin_n^-$.  The orbits are the quadric $\QQ^{2n-2} = PSO_{2n}/P_{\varpi_1}$, and the two maximal orthogonal Grassmannians $OG^+(n,2n) = PSO_{2n}/P_{\varpi_{n-1}}$ and $OG^-(n,2n) = PSO_{2n}/P_{\varpi_n}$.

In type $C_n$, the weight $\varpi_1$ corresponding to the standard representation of $\liesp_{2n}$ is minuscule, and in type $B_n$, the weight $\varpi_n$ corresponding to the spin representation of $\lieso_{2n+1}$ is minuscule.  The minimal orbits are isomorphic to $\P^{2n-1}$ and $OG^+(n+1,2n+2)$, respectively, so they already occur in types $A$ and $D$.

Among simple groups $G$, there are only a few other instances of nonzero minuscule weights.  In type $E_6$, the weights $\varpi_1$ and $\varpi_6$ are minuscule, corresponding to the $27$-dimensional  Jordan algebra representation and its dual; the corresponding varieties are the octonionic projective plane and its dual.  In type $E_7$, there is one nonzero minuscule weight, whose corresponding representation is the $56$-dimensional Brown algebra, and whose corresponding homogeneous space is known as the Freudenthal variety.
\end{example}

As a special case of Theorem~\ref{t.satake}, we have the isomorphism
\[
  H_*\Gr(k,n) = \exterior^k\C^n,
\]
together with the compatible actions by divisor class and regular nilpotent, described in the introduction---in particular, it is no coincidence that one can do this.  We will push this further to obtain a new perspective on Laksov's computation of the equivariant cohomology of $\Gr(k,n)$ in \S\ref{s.giambelli}.

\begin{remark}
Let $\Lambda_\C$ be the ring of symmetric functions with coefficients in $\C$.  It can be identified with the infinite polynomial ring $\C[p_1,p_2,\ldots]$, where $p_r = x_1^r + x_2^r + \cdots$ is the power sum symmetric function.  For $G=PGL_n$, Bott \cite{bott} showed that there is a natural map
\[
  \Lambda_\C \to H^*\GGr_{G}^\circ,
\]
identifying the RHS as $\Lambda_\C/(p_n,p_{n+1},\ldots) \isom \C[p_1,\ldots,p_{n-1}]$.  Furthermore, this identifies the subspace $\mathfrak{P} \subseteq H^*\GGr_{G}^\circ$ of {\em primitive classes} with the space spanned by $\{p_1,\ldots,p_{n-1}\}$.  Ginzburg's proof of the first part of Theorem~\ref{t.satake} establishes an isomorphism $\liea \isom \mathfrak{P}$.  So the power sum symmetric functions play a central role in this story; we will see an echo of this in the rim-hook rule for quantum cohomology (\S\ref{s.rim-hook}).  (A word of caution: this isomorphism does not hold when one takes cohomology with coefficients in $\Z$.  See \cite[Proposition~8.1]{bott} for a more precise statement.)
\end{remark}

\section{The equivariant correspondence}\label{s.eq-satake}

There is an equivariant version of Theorem~\ref{t.satake}, whose proof is sketched in \cite{ginzburg}.  We will write $t$ for the generic element of the Cartan subalgebra $\liet^\vee\subseteq \lieg^\vee$ and use the notation $\lieg^\vee[t]=\lieg^\vee\otimes\C[\liet^\vee]$ for the Lie algebra over the polynomial ring.  Given a $\lieg^\vee$-module $V$, there is an induced $\lieg^\vee[t]$-module $V[t] := V \otimes \C[\liet^\vee]$, where the action is given by
\[
 (x\otimes f)\cdot (v\otimes g) = (x\cdot v) \otimes (fg),
\]
for $x\in \lieg^\vee$, $v\in V$, and $f,g\in \C[\liet^\vee]$.

Next suppose $\lieb^\vee \subseteq \lieg^\vee$ is a Borel subalgebra containing $\liet^\vee$.  Any character $\chi$ of $\liet^\vee$ extends to one of $\lieb^\vee$, and also to $\lieb^\vee[t]$.  If $V$ is a $\lieb^\vee$-module, we can twist it by the character $\chi$ to obtain modules $V(\chi)$ and $V(\chi)[t]$ for $\lieb^\vee$ and $\lieb^\vee[t]$, respectively.  Concretely, if one writes an element of $\lieb^\vee = \lien^\vee \oplus \liet^\vee$ as $x=n+t$, then for $f,g\in \C[\liet^\vee]$ and $v\in V(\chi)$ a weight vector for $\liet^\vee$, we have
\begin{align*}
  (x\otimes f) \cdot (v\otimes g) &= (n\otimes f + t\otimes f)\cdot (v\otimes g) \\
  &= (n\cdot v)\otimes(fg) + (t\cdot v)\otimes(fg) + \chi(t\otimes f)(v\otimes g).
\end{align*}

Now let $\xi_t = \xi-t$ in $\lieg^\vee[t]=\lieg^\vee\otimes\C[\liet^\vee]$, where $\xi$ is a principal nilpotent as before, and $t$ is the generic element of the Cartan subalgebra $\liet^\vee\subseteq \lieg^\vee$.  Concretely, for $\liegl_n$ this is
\begin{equation}\label{e.xih}
 \xi_t = \left[\begin{array}{cccc}
-t_1 &  &  &  \\
1 & \ddots &  &  \\
0 & \ddots & \ddots &  \\
0 & 0 & 1 & -t_n\end{array}\right].
\end{equation}
Let $\liea_t\subseteq \lieg^\vee[t]$ be the centralizer of $\xi_t$.  For $\liegl_n$, this subalgebra is spanned over $\C[\liet^\vee] = \Sym^*\liet^\vee$ by the matrix powers $1, \xi_t, \xi_t^2,\ldots,\xi_t^{n-1}$.

\begin{theorem}[Equivariant Geometric Satake]\label{t.equiv-satake}
There are isomorphisms
\[
  H_{T}^*\GGr_{G}^\circ \isom \univ_{\C[\liet^\vee]}(\liea_t) = \Sym_{\C[\liet^\vee]}^*\liea_t
\]
inducing compatible actions on
\[
  IH^{T}_*(\overline{\GGr^\varpi}) \isom  V_\varpi(-\varpi)[t].
\]
\end{theorem}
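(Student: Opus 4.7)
\bigskip
\noindent\textbf{Proof proposal.}
The plan is to deduce the equivariant statement from Theorem~\ref{t.satake} by combining equivariant formality with a deformation/Nakayama argument, after identifying $\xi_t = \xi - t$ as the correct equivariant lift of the principal nilpotent.

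First I would establish the formal set-up on both sides. The identity component $\GGr_G^\circ$ carries a Bruhat decomposition by even-dimensional $T$-stable cells, so it is equivariantly formal and $H_T^*\GGr_G^\circ$ is a free module over $\C[\liet^\vee]$ whose reduction modulo $t$ recovers $H^*\GGr_G^\circ$. The same holds for the minuscule orbit closures (which are smooth, so $IH^T_* = H^T_*$); for general singular $\overline{\GGr^\varpi}$ one invokes the decomposition theorem and purity of the intersection cohomology sheaves to see that $IH^T_*$ is likewise a free $\C[\liet^\vee]$-module specializing at $t=0$ to $IH_*$. On the representation side, $\univ_{\C[\liet^\vee]}(\liea_t) = \Sym^*_{\C[\liet^\vee]}\liea_t$ is free over $\C[\liet^\vee]$ of the same rank as $\Sym^*\liea$, since $\liea_t$ is a free $\C[\liet^\vee]$-module (spanned, in the $\liegl_n$ case, by $1,\xi_t,\ldots,\xi_t^{n-1}$) that specializes to $\liea$; and $V_\varpi(-\varpi)[t]$ is a free $\C[\liet^\vee]$-module whose reduction modulo $t$ is $V_\varpi$.

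Next I would construct the algebra map $\Phi:\Sym^*_{\C[\liet^\vee]}\liea_t \to H_T^*\GGr_G^\circ$ by lifting the non-equivariant map of Theorem~\ref{t.satake}. Ginzburg identifies $\liea$ with the space of primitive classes in $H^*\GGr_G^\circ$, and these primitive classes admit canonical $T$-equivariant lifts (coming from equivariant Chern classes of the tautological $G$-bundles over $\GGr_G$). The point is that under the left $T$-action, the equivariant lift of a generator sending to $\xi$ is precisely the image of $\xi_t=\xi-t$; this matches the standard observation that equivariantly twisting a divisor class on $\P^{n-1}$ amounts to subtracting the torus parameter. Extending multiplicatively gives $\Phi$, and since both sides are free $\C[\liet^\vee]$-modules of equal rank with $\Phi\bmod t$ an isomorphism (by Theorem~\ref{t.satake}), Nakayama's lemma upgrades $\Phi$ to an isomorphism.

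For the module statement, the plan is parallel. I would define a $\C[\liet^\vee]$-linear map $IH^T_*(\overline{\GGr^\varpi}) \to V_\varpi(-\varpi)[t]$ by sending the MV-cycle basis to the matching weight basis, with normalization chosen so that the $T$-fixed point attached to a weight $\mu$ of $V_\varpi$ maps into the $\mu-\varpi$ weight space of $V_\varpi(-\varpi)[t]$; the shift by $-\varpi$ encodes the fact that the identity coset in $\GGr^\varpi$ has trivial $T$-weight, whereas the highest weight vector in $V_\varpi$ has weight $\varpi$. Compatibility of the two maps reduces to checking that $\xi_t$ acts in the same way on a weight basis on both sides, which is again an equivariant upgrade of the non-equivariant compatibility in Theorem~\ref{t.satake}, and Nakayama finishes the argument after modding out by $t$.

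The step I expect to be the main obstacle is verifying that the equivariant lift of the primitive classes precisely produces $\xi_t = \xi - t$ rather than $\xi$ plus some other correction, and that the MV-cycle basis and weight basis are matched with the exact shift $V_\varpi(-\varpi)$. Concretely, one must trace the equivariant parameter through Ginzburg's construction of the primitive classes (or through the convolution product underlying the geometric Satake equivalence) and pin down the normalization of the equivariant fundamental classes of MV-cycles. Once this single identification is made correctly, equivariant formality and Nakayama do the rest of the work mechanically.
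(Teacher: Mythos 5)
The paper does not prove this theorem: it states it and attributes the proof to a sketch in Ginzburg's preprint \cite{ginzburg}. So there is no proof in the paper to compare against; I will instead assess the proposal on its own terms.

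Your strategy---equivariant formality on both sides, a map lifting the non-equivariant Satake isomorphism, then graded Nakayama---is a plausible way to reduce to Theorem~\ref{t.satake}, but as written it has a genuine gap at precisely the point you flag. Graded Nakayama upgrades \emph{some} map that reduces correctly mod $t$ to an isomorphism; it does not by itself identify that map with the specific one carrying $\liea_t$ (in particular $\xi_t=\xi-t$) to the asserted classes, nor does it pin down the shift to $V_\varpi(-\varpi)$. Those two identifications are the actual content of the theorem, not normalization details to be filled in afterward: without them you have only proved the weaker statement that $H_T^*\GGr_G^\circ$ is a free polynomial algebra of the right size and that $IH_*^T(\overline{\GGr^\varpi})$ is a free module of the right rank. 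To complete the argument you would need either to follow Ginzburg's construction of the primitive classes through the equivariant convolution category, or to produce the equivariant lift of the divisor class directly (for example as $c_1^T$ of the ample line bundle twisted by $-\varpi$, as the paper observes in the remark following the theorem) and verify it matches $\xi_t$ degree by degree on the other side.

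Two smaller points. First, the freeness of $IH_*^T(\overline{\GGr^\varpi})$ over $\C[\liet^\vee]$ for singular orbit closures rests on the parity-vanishing theorem for intersection cohomology of Schubert varieties in the affine Grassmannian (Lusztig, Kazhdan--Lusztig), which you should invoke directly rather than attribute to ``purity and the decomposition theorem'' in general. Second, your claim that the primitive classes lift via equivariant Chern classes of tautological $G$-bundles over $\GGr_G$ is not how the identification $\liea\isom\mathfrak{P}$ is established in \cite{ginzburg}; it goes through the Hopf algebra structure on $H^*\GGr_G^\circ$, and a Chern-class lift of the primitives would itself require proof.
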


The effect of twisting by the character $-\varpi$ is to move the highest weight vector of $V_\varpi$ to weight zero.  This corresponds to endowing $IH^{T}_*({\GGr^\varpi})$ with a $\liet^\vee$-module structure so that the fundamental class $[\overline{\GGr^\varpi}]$ has weight $0$.  This choice has the advantage of identifying the action of the element $\xi_t$ with equivariant multiplication by the divisor class $\sigma_{\bx}$.  (Of course, a similar isomorphism holds without the twist; noting that $\sigma_{\bx} = c_1^T( \OO(1)\otimes(-\varpi) )$, where $\OO(1)$ is the ample line bundle corresponding to the weight $\varpi$, the untwisted version identifies the action of $\xi_t$ with multiplication by $c_1^T( \OO(1) )$.)

\begin{example}
For $\lieg = \liegl_n$, the action of $\xi_t$ on $V_{\varpi_2}(-\varpi_2)[t] = (\exterior^2\C^n\otimes (-t_1-t_2))\otimes\C[\liet^\vee]$ is as follows.
\begin{align*}
  \xi_t\cdot (\e_1\wedge \e_2) &= \e_2\wedge\e_2 + \e_1\wedge\e_3 - (t_1+t_2-t_1-t_2) \e_1\wedge\e_2 \\
    &= \e_1\wedge\e_3,
\end{align*}
corresponding to $\sigma_{\bx}\cdot \sigma_\emptyset = \sigma_{(1)}$ in $H_{T}^*\Gr(2,n)$.  (The cancellation of the last term shows why the twist by $-\varpi$ is necessary.)  Similarly,
\begin{align*}
  \xi_t\cdot (\e_1\wedge \e_3) &= \e_2\wedge\e_3 + \e_1\wedge\e_4 - (t_1+t_3-t_1-t_2) \e_1\wedge\e_3 \\
    &= \e_2\wedge\e_3 + \e_1\wedge\e_4 + (t_2-t_3)\e_1\wedge\e_3,
\end{align*}
corresponding to $\sigma_{\bx}\cdot\sigma_{(1)} = \sigma_{(1,1)}+\sigma_{(2)} + (t_2-t_3)\sigma_{(1)}$.
\end{example}

As in the non-equivariant case, one needs to beware of the notation: matrix powers $\xi_t^j$ do not correspond to iterates of the Lie algebra action, e.g., $\xi_t^2\cdot \e_I$ is generally not equal to $\xi_t\cdot(\xi_t\cdot \e_I)$.

\begin{example}\label{ex.powers}
Still in the case $\lieg=\liegl_n$, let us consider higher powers of $\xi_t$.  One computes the entries of the matrix powers as
\begin{align*}
\xi_t^j \e_i &= \e_{i+j} - h_1(t_i,\ldots,t_{i+j-1})\,\e_{i+j-1} + \cdots + (-1)^j\,h_j(t_i)\,\e_i \\
    &= \sum_{a=0}^j (-1)^a\,h_a(t_i,\ldots,t_{i+j-a})\,\e_{i+j-a},
\end{align*}
where the $h_a$ are complete homogeneous symmetric polynomials in the indicated variables.  (That is, the $(i+j-a,i)$ matrix entry of $\xi_t^j$ is $(-1)^a h_a(t_i,\ldots,t_{i+j-a})$.)  Incorporating the twist by $-\varpi_k$, the Lie algebra action on $V_{\varpi_k}(-\varpi_k)[t]$ is
\begin{align*}
 \xi_t^j\cdot \e_I &= \left(\sum_{a=0}^j (-1)^a\,h_a(t_{i_1},\ldots,t_{i_1+j-a})\,\e_{i_1+j-a}\right) \wedge \e_{i_2} \wedge \cdots \wedge \e_{i_k} \\
    &\qquad + \e_{i_1} \wedge \left(\sum_{a=0}^j (-1)^a\,h_a(t_{i_2},\ldots,t_{i_2+j-a})\,\e_{i_2+j-a}\right) \wedge \cdots \wedge \e_{i_k} \\
    &\qquad + \cdots + \e_{i_1}\wedge\e_{i_2}\wedge\cdots\wedge\left(\sum_{a=0}^j (-1)^a\,h_a(t_{i_k},\ldots,t_{i_k+j-a})\,\e_{i_k+j-a}\right) \\
    & \qquad  - ( (-t_1)^j + \cdots + (-t_k)^j)\, \e_I.
\end{align*}
For instance,
\begin{align*}
 \xi_t^2\cdot (\e_2\wedge\e_3) &= (\e_4 - (t_2+t_3)\,\e_3 + t_2^2\,\e_2) \wedge \e_3 + \e_2\wedge ( \e_5 - (t_3+t_4)\,\e_4 + t_3^2\,\e_3 )\\
   & \qquad  - (t_1^2+t_2^2)\,\e_2\wedge\e_3 \\
   &= \e_2\wedge\e_5 - \e_3\wedge\e_4 - (t_3+t_4)\,\e_2\wedge\e_4 + (t_3^2-t_2^2)\,\e_2\wedge\e_3.
\end{align*}
The leading term agrees with the computation $p_2\cdot \sigma_{(1,1)} = \sigma_{(3,1)}-\sigma_{(2,2)}$ in $H^*\Gr(2,n)$ done in the introduction.
\end{example}

A new feature appears in the equivariant correspondence.  Let us pass to the fraction field $\C(\liet^\vee)$, and consider $\lieg^\vee(t)$, etc., as Lie algebras over this field.  Since the element $\xi_t$ is regular semisimple, its centralizer $\lieh\subseteq \lieg^\vee(t)$ is a Cartan subalgebra.  (In fact, $\lieh$ is just the extension of $\liea_t$ to $\C(\liet^\vee)$.)  So our setup leads naturally to another basis for $V_{\varpi}(-\varpi)(t)$, a basis of weight vectors for $\lieh(t)$, diagonalizing $\xi_t$.

What is this basis on the geometric side of the correspondence?  By the localization theorem (see \cite[(6.3)]{gkm}), there is a fixed-point basis for $IH^T_*({\GGr^\varpi})\otimes\C(\liet^\vee)$, and in fact this basis corresponds to a (suitably chosen) weight basis for $\lieh$.

\setcounter{theorem}{3}
\begin{theorem}[Equivariant Satake, continued]
Under the Satake isomorphism $IH^{T}_*(\overline{\GGr^\varpi}) \isom  V_\varpi(-\varpi)[t]$, equivariant $MV$-cycles correspond to a weight basis of $V_\varpi(-\varpi)[t]$ with respect to $\liet^\vee$, and the fixed point basis corresponds to a weight basis with respect to $\lieh$.
\end{theorem}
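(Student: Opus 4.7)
The plan is to deduce both assertions from the equivariant Satake isomorphism of Theorem~\ref{t.equiv-satake}, combined with two standard tools: the Goresky-Kottwitz-MacPherson localization theorem in $T$-equivariant intersection homology, and the equivariant formality of $\overline{\GGr^\varpi}$.

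For the MV-cycle statement, I would first invoke equivariant formality: because $\overline{\GGr^\varpi}$ admits a $T$-invariant affine paving (for instance by Schubert cells), its equivariant intersection homology is free over $\C[\liet^\vee]$, and a non-equivariant basis lifts canonically to a basis on the equivariant side. Applied to the non-equivariant MV-cycle basis supplied by Theorem~\ref{t.satake}, this produces a $\C[\liet^\vee]$-basis of $V_\varpi(-\varpi)[t]$. The remaining purity check is that each equivariant MV-class really lies in a single $\liet^\vee$-weight space, of the same weight as its non-equivariant image. This should follow from the fact that the MV-cycle attached to a weight $\mu$ of $V_\varpi$ sits inside the semi-infinite cell which attracts, under a generic one-parameter subgroup of $T$, onto the $T$-fixed point of weight $\mu$; $T$-invariance together with the contracting behavior then forces its equivariant class into the $(\mu-\varpi)$-weight component of $V_\varpi(-\varpi)[t]$.

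For the fixed-point statement, I would apply the localization theorem. After inverting the nonzero characters of $T$, the equivariant intersection homology has a fixed-point basis
\[
 IH^T_*(\overline{\GGr^\varpi})\otimes_{\C[\liet^\vee]} \C(\liet^\vee) \;=\; \bigoplus_{p\in(\overline{\GGr^\varpi})^T} \C(\liet^\vee)\cdot [p],
\]
and for any $c\in H^*_T(\overline{\GGr^\varpi})$ the cap product satisfies $c\cap [p] = c|_p\cdot [p]$. By Theorem~\ref{t.equiv-satake}, the cap-product action of $H^*_T(\GGr^\circ_G)$ translates into the action of $\univ(\liea_t)$; upon extending scalars to $\C(\liet^\vee)$, the Lie algebra $\liea_t$ becomes the Cartan $\lieh$ of $\lieg^\vee(t)$. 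Hence the fixed-point classes simultaneously diagonalize $\lieh$, i.e., they form an $\lieh$-weight basis of $V_\varpi(-\varpi)(t)$, as claimed.

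I expect the principal obstacle to be the purity step of the MV-cycle argument, namely showing that the equivariant lift of an MV weight vector has coefficient $1\in\C[\liet^\vee]$ rather than a nontrivial polynomial multiple. This is not automatic from the bare Satake isomorphism; it depends on the $T$-equivariance built into the construction of the Satake functor (via hyperbolic restriction in the sense of Braden, or equivalently via the semi-infinite orbit decomposition of Mirkovi\'c-Vilonen).
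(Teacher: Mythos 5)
Your second half — the fixed-point statement — follows essentially the same route as the paper. The paper's proof of that direction is precisely Lemma~\ref{l.idempotent} (the ``almost a tautology'' observation that idempotents of $H_T^*X^T\otimes\C(\liet^\vee)$ simultaneously diagonalize every cup product), coupled with the GKM localization theorem and the identification $\lieh=\liea_t\otimes\C(\liet^\vee)$. Your argument is the same, phrased with $[p]$ in place of $\mathbf{1}_p$; the distinction is just a normalization by equivariant Euler classes and is immaterial for the eigenvector claim.

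For the first half — the $\liet^\vee$-weight statement about equivariant MV-cycles — the paper does not actually present an argument; Theorem~4 is stated as a citation-level fact continuing Theorem~\ref{t.equiv-satake}, and the paper then immediately specializes to the minuscule case where MV-cycles are $T$-invariant Schubert varieties, so that the $\liet^\vee$-weight decomposition is visible by hand. Your proposed argument via equivariant formality plus the semi-infinite cell / hyperbolic restriction picture is a genuine (and more ambitious) proof sketch, and the ingredients you cite are the right ones. But as you yourself flag, the ``purity'' step is not carried out: to conclude that an equivariant MV-cycle class lies in a single $\liet^\vee$-weight summand (rather than being a nontrivial $\C[\liet^\vee]$-linear combination of several), one really needs the $T$-equivariant incarnation of the Mirkovi\'c-Vilonen weight functor (equivariant hyperbolic localization along the semi-infinite orbits), not just equivariant formality of the target. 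Equivariant formality gives a basis; by itself it does not pin the basis elements to weight spaces. So the second half of your write-up is complete and matches the paper, while the first half identifies the right mechanism but stops short of a proof the paper itself does not attempt.
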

\setcounter{theorem}{6}

In general, there is ambiguity in choosing a weight basis.  However, for minuscule $\varpi$, all weight spaces of $V_\varpi$ are one-dimensional, so a weight basis is determined (up to scaling) by the Cartan.  As noted before, in this case $\GGr^\varpi = G/P_\varpi$ is homogeneous, and the MV basis consists of (opposite) Schubert classes.  This is the situation we will consider for the remainder of the paper.  Let us write $X=G/P_\varpi$.

Let us write $\{\sigma_\lambda\}$ for the basis of Schubert classes in $H_T^*X$.  The fixed point set is $X^T = \{p_\lambda\}$ (with $\lambda$ running over the same set indexing Schubert classes), and we will write $\{{\bf1}_\lambda\}$ for the corresponding idempotent basis of $H_T^*X^T = \bigoplus H_T^*(p_\lambda)$.  The localization theorem says that the restriction homomorphism (of $\C[\liet^\vee]$-algebras)
\[
 \iota^*\colon H_T^*X \to H_T^*X^T
\]
becomes an isomorphism after tensoring with $\C(\liet^\vee)$.  An important part of equivariant Schubert calculus is to compute the restriction of a Schubert class $\sigma_\lambda$ to a fixed point $p_\mu$.  Formulas for these restrictions have been given by Billey for complete flag varieties \cite{billey}, and by Ikeda-Naruse, who consider special cases that are related to the focus of this article \cite{in}.

The fact that the fixed-point classes form a basis of eigenvectors for $\xi_t$ is part of a general phenomenon, with a simple proof.  Consider any nonsingular variety $X$ with finite fixed locus $X^T$, and any class $\alpha\in (H_T^*X) \otimes \C(\liet^\vee)$.

\begin{lemma}\label{l.idempotent}
The idempotent classes ${\bf 1}_p \in (H_T^*X) \otimes \C(\liet^\vee) = (H_T^*X^T) \otimes \C(\liet^\vee)$ form a basis of eigenvectors for the endomorphism $x \mapsto \alpha\cdot x$.
\end{lemma}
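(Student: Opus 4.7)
The plan is to prove this by invoking the localization theorem and then observing that everything becomes componentwise once we identify the equivariant cohomology of $X$ with that of its fixed locus, rationally.

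First, I would use the localization isomorphism $\iota^*\colon (H_T^*X)\otimes\C(\liet^\vee) \xrightarrow{\sim} (H_T^*X^T)\otimes\C(\liet^\vee)$ stated just before the lemma to work entirely on the fixed-point side. Since $X^T$ is finite, we have a canonical product decomposition
\[
 H_T^*X^T = \bigoplus_{p\in X^T} H_T^*(p),
\]
where each $H_T^*(p)\cong \C[\liet^\vee]$, and the idempotents ${\bf 1}_p$ are precisely the indicator elements of this decomposition. In particular $\{{\bf 1}_p\}_{p\in X^T}$ is a $\C(\liet^\vee)$-basis of $(H_T^*X^T)\otimes\C(\liet^\vee)$ with ${\bf 1}_p\cdot {\bf 1}_q = \delta_{pq}\,{\bf 1}_p$.

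Next, I would write $\iota^*\alpha = \sum_p \alpha|_p\, {\bf 1}_p$, where $\alpha|_p \in \C(\liet^\vee)$ is the restriction of $\alpha$ to the point $p$. Because the product in $H_T^*X^T$ is computed componentwise, multiplication by $\alpha$ acts as
\[
 \alpha\cdot {\bf 1}_p = \left(\sum_q \alpha|_q\,{\bf 1}_q\right)\cdot {\bf 1}_p = \alpha|_p\, {\bf 1}_p.
\]
Thus each ${\bf 1}_p$ is an eigenvector of the endomorphism $x\mapsto \alpha\cdot x$ with eigenvalue $\alpha|_p$, and together they form a basis as desired.

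There is no real obstacle here: the only step requiring outside input is the localization theorem itself, which has already been cited from \cite{gkm}. The content of the lemma is essentially that the idempotent basis simultaneously diagonalizes every multiplication operator, which is an immediate consequence of the splitting $H_T^*X^T = \bigoplus_p H_T^*(p)$. If one wanted to emphasize the eigenvalues explicitly, it would be worth noting that for $\alpha = \sigma_{\bx}$ (the equivariant divisor class) the eigenvalues $\sigma_{\bx}|_p$ are exactly the $T$-weights of the fixed points under the line bundle $\OO(1)\otimes(-\varpi)$, recovering the identification of $\xi_t$-eigenvalues promised after Theorem~4.
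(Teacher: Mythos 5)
Your argument is correct and is essentially identical to the paper's: both pass through localization, use the orthogonality ${\bf 1}_p\cdot{\bf 1}_q=\delta_{pq}{\bf 1}_p$, and expand $\alpha=\sum_p\alpha|_p\,{\bf 1}_p$ to read off the eigenvalues. The paper states this more tersely, but the content is the same.
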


\noindent
This is almost a tautology.  Simply observe that for distinct fixed points $p\neq q$, we have ${\bf 1}_p\cdot {\bf1}_q=0$.  Writing $\alpha = \sum \alpha_p\cdot {\bf1}_p$, the statement follows.

Returning to minuscule Schubert calculus, the restrictions $\sigma_\lambda|_\mu$ may be regarded as matrix entries for the homomorphism $\iota^*$, with respect to the Schubert and fixed-point bases.  The Satake correspondence translates the problem of computing this matrix into the following:

\medskip

\begin{quote}
 Find the change-of-basis matrix relating weight bases of the minuscule representation $V_{\varpi}(-\varpi)(t)$, with respect to two (specific) Cartan subalgebras, $\liet^\vee$ and $\lieh$, of $\lieg^\vee(t)$.
\end{quote}

\medskip

This perspective also suggests a framework for setting up and solving the problem of computing restrictions of Schubert classes $\sigma_\lambda|_\mu$.  We will work this out in types A and D in Sections~\ref{s.giambelli}--\ref{s.spinors}.

\begin{example}
Continuing our type A running example, consider the matrix expressing the Schubert basis $\sigma_i$ in terms of the fixed-point basis ${\bf1}_j$, for $\P^{3}$ (so $n=4$).  This is
\[
M=\left[\begin{array}{cccc}
1 & 0 & 0 & 0 \\
1 & t_1-t_2 & 0 & 0 \\
1 & t_1-t_3 & (t_1-t_3)(t_2-t_3) & 0 \\
1 & t_1-t_4 & (t_1-t_4)(t_2-t_4) & (t_1-t_4)(t_2-t_4)(t_3-t_4)
\end{array}\right].
\]
This represents the homomorphism $\iota^*\colon H_T^*\P^3 \to H_T^*(\P^3)^T$ with respect to the specified bases.

In our examples so far, we have written representations of $\liegl_4$ in terms of the Schubert basis, $\e_i=\sigma_i$.  By inverting the matrix $M$, we express the fixed point basis $f_j={\bf1}_j$ in terms of the $\e_i$'s.  One checks that this diagonalizes the regular semisimple operator $\xi_t$; that is, $M\cdot \xi_t\cdot M^{-1}$ is diagonal, with entries $-t_1,\ldots,-t_4$.  
So the $\e$ basis is a weight basis for the standard (diagonal) torus $\liet^\vee \subset \liegl_4$, while the $f$ basis is a weight basis for the centralizer $\lieh$ of $\xi_t$; the two are related by the matrix $M$ of the restriction homomorphism $\iota^*$.
%
\end{example}

\section{A Giambelli formula for Grassmannians}\label{s.giambelli}

We will describe a proof of the ``equivariant Giambelli formula''
\begin{equation}\label{e.giambelliA}
  \sigma_\lambda = s_\lambda(x|t)
\end{equation}
identifying the Schubert class $\sigma_\lambda \in H_T^*\Gr(k,n)$ with a {\it factorial Schur polynomial}, in the spirit of Laksov's approach to equivariant Schubert calculus \cite{laksov}.  The following definition of the factorial Schur polynomial can be found in Macdonald's book \cite[\S I.3, Ex.~20]{macdonald}.  The {\it (generalized) factorial power} is defined as
\[
  (x|t)^a = (x+t_1)(x+t_2)\cdots(x+t_a).
\]
Let $I = \{i_1,\ldots,i_k\} \subseteq [n]$ be the subset corresponding to the partition $\lambda$; recall that this means $\lambda_{k+1-a} = i_a-a$.  One defines
\begin{align*}
  s_\lambda(x|t) &=  \frac{\det\left( (x_j|t)^{i-1} \right)_{i\in I, 1\leq j\leq k}}{\det\left( (x_j|t)^{i-1} \right)_{1\leq i,j\leq k}}.
\end{align*}
An easy computation shows the denominator is
\[
 \det\left( (x_j|t)^{i-1} \right)_{1\leq i,j\leq k} = \det(x_j^{i-1}) =\prod_{1\leq a<b\leq k} (x_a-x_b) =: \Delta,
\]
the Vandermonde determinant, so the factorial Schur polynomial can also be written as
\[
 s_\lambda(x|t) = \frac{\det\left( (x_j|t)^{i-1} \right)_{i\in I, 1\leq j\leq k}}{\Delta}.
\]

The meaning of the Giambelli formula is this.  By the localization theorem, the equivariant cohomology of $\Gr(k,n)$ embeds in that of its fixed locus:
\[
  H_T^*\Gr(k,n) \hookrightarrow H_T^*(\Gr(k,n)^T) = \bigoplus_J \C[t],
\]
the sum being over all $k$-element subsets $J\subset [n]$.  On the other hand,
there is a presentation of $H_T^*\Gr(k,n)$ as a quotient of $\C[t][x_1,\ldots,x_k]^{S_k}$ (symmetric polynomials in $x$, with coefficients in $\C[t]$).  Composing with the localization homomorphism gives
\[
  \C[t][x_1,\ldots,x_k]^{S_k} \to \bigoplus_J \C[t],
\]
defined on the $J$th summand by sending $x_a \mapsto -t_{j_a}$.  The precise statement is this:

\begin{theorem}
Under the homomorphism $\C[t][x_1,\ldots,x_k]^{S_k} \to H_T^*\Gr(k,n)$, we have $s_\lambda(x|t) \mapsto \sigma_\lambda$.  Equivalently, for each $J=\{j_1<\cdots<j_k\}$, we have
\[
  \sigma_\lambda|_{p_J} = s_\lambda(-t_{j_1},\ldots,-t_{j_k}|t).
\]
\end{theorem}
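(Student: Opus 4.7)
The plan is to use the equivariant Satake correspondence to translate the Giambelli formula into a change-of-basis computation on $\exterior^k\C^n$, and then reduce the general case to the projective-space case via the exterior-algebra structure. Under Theorem~\ref{t.equiv-satake} and its continuation, the Schubert basis $\{\sigma_\lambda\}$ corresponds to the wedge basis $\{\e_I\}$, while Lemma~\ref{l.idempotent} identifies the fixed-point idempotents $\{{\bf1}_J\}$ with a joint eigenbasis for the Cartan $\lieh$ centralizing $\xi_t$. Computing $\sigma_\lambda|_{p_J}$ is therefore equivalent to computing the coefficients of $\e_I$ expanded in (a rescaling of) this eigenbasis.

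First I would settle the case $k=1$ directly. Because $\xi_t$ in \eqref{e.xih} has simple spectrum $\{-t_1,\ldots,-t_n\}$, solving $(\xi_t + t_j)v = 0$ recursively yields explicit eigenvectors $f_j$ and an expansion
\[
\e_i \;=\; \sum_{j=1}^n (-t_j|t)^{i-1}\,f_j,
\]
with normalization fixed by requiring $\e_1 = \sum_j f_j$ (the identity on $\P^{n-1}$ restricts to $1$ at every fixed point), so that $f_j = {\bf1}_{p_j}$. This matches the matrix $M$ in the example following Lemma~\ref{l.idempotent} and proves the $k=1$ case: $\sigma_{i-1}|_{p_j} = (-t_j|t)^{i-1}$.

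For arbitrary $k$, the key observation is that $\xi_t$ acts on $\exterior^\bullet\C^n$ as a derivation twisted by the scalar $-\varpi_k$; this shifts every eigenvalue uniformly but preserves eigenvectors, so the wedges $f_J := f_{j_1}\wedge\cdots\wedge f_{j_k}$ form a simultaneous eigenbasis for $\lieh$ on $\exterior^k\C^n$. Expanding $\e_I = \e_{i_1}\wedge\cdots\wedge\e_{i_k}$ via the $k=1$ formula and collecting terms produces the determinantal minor $\det\bigl((-t_{j_b}|t)^{i_a-1}\bigr)_{a,b=1}^k$ as the coefficient of $f_J$, which is precisely the numerator of the Cauchy-style ratio defining $s_\lambda$.

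The main obstacle is identifying the scalar relating the wedge $f_J$ to the geometric idempotent ${\bf1}_J$. The Satake correspondence only determines weight vectors up to scaling, and matching the geometric convention forces this scalar to be (up to sign) the Vandermonde $\Delta(-t_{j_1},\ldots,-t_{j_k})$; it can be pinned down by evaluating the identity class $\sigma_\emptyset = \e_{(1,\ldots,k)}$, whose restrictions must all equal $1$, or equivalently by matching the top Schubert class against the equivariant Euler class of the tangent space at a single fixed point. Once the Vandermonde is extracted, the coefficient of ${\bf1}_J$ in $\iota^*\sigma_\lambda$ becomes precisely $s_\lambda(-t_{j_1},\ldots,-t_{j_k}|t)$, completing the proof---and automatically producing the Cauchy ratio form rather than the Jacobi--Trudi determinant, as anticipated in the introduction.
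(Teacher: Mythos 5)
Your proposal is correct and follows essentially the same three-step argument as the paper: settle $k=1$ by diagonalizing $\xi_t$ and normalizing via $\e_1 = \sum_j {\bf 1}_j$, pass to exterior powers so that the determinantal minor appears as the coefficient of $f_J$, and pin down the scalar $\Delta_J$ relating $f_J$ to ${\bf 1}_J$ by expanding $\e_{\{1,\ldots,k\}} = \sigma_\emptyset$. The paper records the intermediate verification that $\xi_t$ acting on $\{\e_I\}$ matches the equivariant Chevalley formula for $\sigma_{\bx}\cdot\sigma_I$, which you implicitly subsume under the Satake correspondence; otherwise the two arguments coincide.
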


\begin{proof}
There are three simple steps.  We describe them informally first, since we will follow the same pattern in proving a type D formula later.

\begin{enumerate}
\item Work out the case $k=1$, corresponding to projective space.  Here, by construction, the element $\xi_t$ corresponds to multiplication by the hyperplane class $\sigma_{\bx}$ on $H_T^*\P^{n-1}$, written in the Schubert basis $\e_i = [\P^{n-i}]$.  We choose a basis $\barf_i$ diagonalizing the semisimple element $\xi_t$; by Lemma~\ref{l.idempotent}, this basis coincides with the basis of idempotents ${\bf1}_i$, up to scalar.  We normalize the $\barf_i$ so that $\barf_i={\bf1}_i$, by requiring $\e_1 = \barf_1 + \cdots + \barf_n$ (since $\e_1$ corresponds to ${\bf1}\in H_T^*\P^{n-1}$).  The expansion of $\e_i$ in the $\barf_j$ basis is then a localization calculation, which is easy for projective space.

\medskip

\item For each $k>1$, take the weight basis $\{\e_I\}$ of $V_{\varpi_k}(-\varpi_k)[t] = \exterior^k_{\C[t]} \C[t]^n$ to be $\e_I = \e_{i_1}\wedge \cdots \wedge \e_{i_k}$.  Verify that the action of $\xi_t$ agrees with the known formula for multiplication by $\sigma_{\bx}$ on the Schubert basis of $H_T^*\Gr(k,n)$, so that we can identify $\sigma_I = \e_I$.  (The latter formula is often called the {\it equivariant Chevalley formula}.)

\medskip
\item By Lemma~\ref{l.idempotent} again, the vectors $\barf_{j_1}\wedge \cdots \wedge \barf_{i_k}$ agree with the basis of idempotents ${\bf1}_J$, up to scalar; normalize it so that $\barf_J={\bf1}_J$ by requiring $\e_{\{1,\ldots,k\}} = \sum \barf_J$ (since $\e_{\{1,\ldots,k\}}$ corresponds to ${\bf1} \in H_T^*\Gr(k,n)$).  On the other hand, formulas from Step (1) expressing $\e_i$ in terms of $\barf_j$ yield (determinantal) formulas for $\e_I$ in terms of $\barf_{j_1}\wedge \cdots \wedge \barf_{j_k}$; comparing with the normalized vectors $\barf_J$ proves the theorem.
\end{enumerate}

Now we proceed to work this out in detail.  It is not hard to see that
\[
  f_i = \e_i + \frac{1}{(t_{i+1}-t_i)}\e_{i+1} + \cdots + \frac{1}{(t_n-t_i)\cdots(t_{i+1}-t_i)}\e_n
\]
is a basis of eigenvectors for $\xi_t$ acting on $\C^n\otimes \C(t)$ (inverting nonzero characters).  This is related to $\e_i$ by a unitriangular change of basis. However, note that $\sigma_i|_{p_i} = (t_1-t_i)\cdots (t_{i-1}-t_i)$ (since $\Omega_i = \P^{n-i}$ is defined by the vanishing of the first $i-1$ coordinates); this means that we must rescale to obtain the idempotent basis.  In fact,
\[
  \barf_i = \frac{1}{(t_1-t_i)\cdots (t_{i-1}-t_i)}f_i
\]
identifies with the idempotent basis ${\bf1}_i\in H_T^*(\P^{n-1})^T$.  
Since we know the restrictions of $\e_i = [\P^{n-i}]$ to fixed points, we see
\begin{align*}
  \e_i &= \sum_j (t_1-t_j)\cdots(t_{i-1}-t_j)\,\barf_j \\
       &= \sum_j (x_j|t)^{i-1}|_{x_j=-t_j}\, \barf_j.
\end{align*}
using the generalized factorial power notation.  This completes the first step.

For the second step, we take $\e_I = \e_{i_1}\wedge \cdots \wedge \e_{i_k}$ as our basis for $V_{\varpi_k}(-\varpi_k)[t] = H_T^*\Gr(k,n)$.  The verification that $\xi_t$ acts on this basis as $\sigma_{\bx}$ does on the Schubert basis is left to the reader. (Illustrative examples were done above.)  We note that the Chevalley formula says
\[
  \sigma_{\bx}\cdot \sigma_I = \sum_{I^+} \sigma_{I^+} + (t_1+\cdots+t_k-t_{i_1}-\cdots-t_{i_k})\sigma_I,
\]
where the sum is over $I^+$ obtained from $I$ by replacing some $i_a\in I$ such that $i_a+1\not\in I$ by $i_a+1$.  (In terms of the corresponding partitions, $\lambda(I^+)$ is obtained from $\lambda(I)$ by adding a single box.)

Finally, from the definition of exterior product, we get
\[
  \e_I := \e_{i_1}\wedge\cdots\wedge\e_{i_k} = \sum_J \det\left( (x_j|t)^{i-1}|_{x_j=-t_j} \right)_{i\in I, \,j\in J} \, \barf_{j_1}\wedge\cdots\wedge\barf_{j_k}.
\]
In particular,
\begin{align*}
  \e_{\{1,\ldots,k\} } &= \sum_J \det\left( (x_j|t)^{i-1}|_{x_j=-t_j} \right)_{1\leq i\leq k, \,j\in J} \, \barf_{j_1}\wedge\cdots\wedge\barf_{j_k} \\
    &= \sum_J \Delta_J \, \barf_{j_1}\wedge\cdots\wedge\barf_{j_k},
\end{align*}
where
\[
  \Delta_J = \prod_{1\leq a<b\leq k} (t_{j_a}-t_{j_b})
\]
is the specialization of the Vandermonde determinant $\Delta$.  Since $\e_{\{1,\ldots,k\} }$ should be identified with ${\bf1}=[\Gr(k,n)]$ in $H_T^*\Gr(k,n)$, this tells us that the idempotent classes are
\[
  {\bf1}_J = \barf_J := \Delta_J \,\barf_{j_1}\wedge\cdots\wedge\barf_{j_k},
\]
and we can rewrite the above formula as
\begin{align*}
  \e_I := \e_{i_1}\wedge\cdots\wedge\e_{i_k} &= \sum_J \left( \frac{\det\left( (x_j|t)^{i-1}|_{x_j=-t_j} \right)_{i\in I, \,j\in J}}{\Delta_J} \right)\, \barf_J \\
    &= \sum_J s_{\lambda}(x_1,\ldots,x_k|t)|_{x_a = -t_{j_a}} \barf_J,
\end{align*}
as required.
\end{proof}

\begin{remark}
It is hard to give clear attribution to the equivariant Giambelli formula; certainly it was known by around 2000.  Reference to it appears in \cite{knutson-tao}, and a proof is in \cite{mihalcea-giambelli}.  In retrospect, the Kempf-Laksov formula \cite{kl} is equivalent to \eqref{e.giambelliA}.  See also \cite{poland} for more discussion and an alternative proof.

Likewise, it is difficult to identify the earliest appearance of the connection between the $\liegl_n$-module $\exterior^k\C^n$ and the cohomology of $H^*\Gr(k,n)$.  While surely known long before, it appears in several sources by the 2000s \cite{gatto,laksov,varagnolo,kamnitzer}.
\end{remark}

\begin{remark}
Recall that the Grassmannian $\Gr(k,n) = \Gr(k,V^*)$ embeds naturally in $\P(\exterior^kV^*)$ as the locus of ``pure wedges'' $v_1 \wedge \cdots \wedge v_k$.  Since our basis $\e_1,\ldots,\e_n$ for $V$ is a weight basis for $\liet^\vee$, the dual basis $\e^*_1,\ldots,\e^*_n$ is a weight basis for the action of $T$ on $V^*$; the points of $\Gr(k,V^*)$ corresponding to $\e^*_I = \e^*_{i_1}\wedge\cdots\wedge \e^*_{i_k}$ are therefore precisely the $T$-fixed points.  Thus the Satake correspondence exchanges $T$-fixed points in $\Gr(k,V^*)\subseteq\P(\exterior^kV^*)$ with Schubert classes in $\P(H^*\Gr(k,V)) = \P(\exterior^kV)$.  When considered as cohomology classes on $\Gr(k,V^*)$, do pure wedges in $\P(\exterior^kV)$ have a natural geometric meaning?  What does this correspondence look like when upgraded to the equivariant setting?
\end{remark}

\section{Quadrics}\label{s.quadrics}

As another example, we describe the correspondence for minuscule varieties of type $D$.  First we consider quadrics.  The calculations carried out in this section will be used in the section, to prove the equivariant Giambelli formula for maximal isotropic Grassmannians (Theorem~\ref{t.giambelliD}).

To set things up, fix a basis
\[
  \e_{\bar{n-1}},\ldots,\e_{\bar{1}},\e_{\bar{0}},\e_0,\e_1,\ldots,\e_{n-1}
\]
for $V\isom\C^{2n}$, and equip this vector space with the symmetric bilinear form defined by $\langle \e_{\bar\imath},\,\e_j \rangle = \delta_{i,j}$.  (The barred indices should be regarded as notation for negative integers.)  The form identifies the dual basis for $V^*=V$ as $\e_i^*=\e_{\bar\imath}$.  The subspaces $E=\Span\{\e_0,\ldots,\e_{n-1}\}$ and $\bar{E} = \Span\{\e_{\bar{n-1}},\ldots,\e_{\bar{0}} \}$ are maximal isotropic subspaces of $V$, and we have $V=\bar{E}\oplus E$.

We will take $\lieso_{2n} \subseteq \liesl_{2n}$ to be the algebra preserving the given bilinear form; there is also a canonical identification $\lieso_{2n} = \exterior^2V$ (see, e.g., \cite[\S20]{fh}).  We take our principal nilpotent element $\xi$ and generic $t$ so that
\begin{equation}\label{e.d-xit}
\xi_{t} = \xi-t = \left[\begin{array}{cccccccc}
t_{n-1} &   &   &   &  &   &  &   \\
-1 & \ddots &   &   &   &   &   &   \\
 & \ddots & t_1 &   &   &   &   &   \\
 &   & -1 & t_0 & 0 &   &   &   \\
 &   & -1 & 0 & -t_0 &   &   &   \\
 &   &   & 1 & 1 & \ddots &   &   \\
 &   &   &   &   & \ddots & -t_{n-2} &   \\
 &   &   &   &   &   & 1 & -t_{n-1}\end{array}\right] .
\end{equation}
Using $\e_{\bar\imath}^*=\e_i$, $\xi$ can also be written as
\begin{align*}
  \xi &= -\sum_{i=1}^{n-1} \e_{\bar{\imath}}^*\otimes \e_{\bar{i-1}} - \e_{\bar{1}}^*\otimes \e_0 + \e_{\bar{0}}^*\otimes \e_1  + \sum_{i=1}^{n-1} \e_{i-1}^*\otimes \e_{i} \\
  &= \e_0 \wedge \e_1 + \sum_{i=1}^{n-1} \e_{\bar{i-1}} \wedge \e_{i} ,
\end{align*}
which exhibits it as an element of $\exterior^2V$.  Similarly, we have
\begin{align}
 \xi_t &= \e_0 \wedge \e_1 + \sum_{i=1}^{n-1} \e_{\bar{i-1}} \wedge \e_{i} - \sum_{i=0}^{n-1} t_i\, \e_{\bar\imath} \wedge \e_i.
\end{align}
Note that our indexing conventions and choice of form $\langle\; ,\; \rangle$ make it natural to identify elements of $\exterior^2 V$ with matrices which are skew-symmetric about the {\it anti-diagonal}.

The odd matrix powers $\xi_t, \xi_t^3, \ldots, \xi_t^{2n-3}$ all lie in $\lieso_{2n}$ as well, and they are easily seen to be linearly independent elements of the centralizer $\liea_t$ of $\xi_t$.  Since $\xi_t$ is regular, one knows $\dim\liea_t = n$; the missing element is
\begin{equation}
  \eta_t =  -\sum_{j=1}^{n-1} (t_{j+1}\cdots t_{n-1}) \e_0\wedge \e_j + \sum_{0\leq i\leq j \leq n-1} (t_0 \cdots t_{i-1} t_{j+1} \cdots t_{n-1}) \e_{\bar\imath} \wedge \e_j.
\end{equation}
For example, when $n=4$, this is
\begin{equation*}\label{e.d-etat}
\eta_{t} = \left[\begin{array}{cccccccc}
-t_0 t_1 t_2 &   &   &   &  &   &  &   \\
-t_0 t_1   & -t_0 t_1 t_3  &   &   &   &   &   &   \\
-t_0     &  -t_0 t_3      & -t_0 t_2 t_3 &   &   &   &   &   \\
-1      &  -t_3 & -t_2 t_3 & -t_1 t_2 t_3 &  &   &   &   \\
 1      & t_3   & t_2 t_3  &   0 & t_1 t_2 t_3 &   &   &   \\
 0      & 0     &  0       & -t_2 t_3 & t_2 t_3 & t_0 t_2 t_3  &   \\
 0      & 0     &  0       & -t_3    & t_3  & t_0 t_3  & t_0 t_1 t_3 &   \\
 0      & 0     &  0       &  -1     &  1   &  t_0     & t_0 t_1   & t_0 t_1 t_2 \end{array}\right] .
\end{equation*}

When discussing homogeneous spaces, we will assume $n\geq 3$ to avoid setting conventions for special cases.\footnote{When $n=1$, the spaces are $0$-dimensional; when $n=2$, they coincide with type $A$ spaces.  For $n=3$, there are  coincidences $\QQ^4 = \Gr(2,4)$ and $OG^+(3,6)=OG^-(3,6)=\P^3$.  For $n=4$, there are also coincidences $\QQ^6 = OG^+(4,8) = OG^-(4,8)$.  The reader may use these to verify our claims, but beware that the torus actions are usually written differently.}
Consider the $(2n-2)$-dimensional quadric $\QQ = \QQ^{2n-2} \subseteq \P(V^*)\isom  \P^{2n-1}$ of isotropic vectors for the given bilinear form.  (In coordinates, $\QQ$ is defined by the vanishing of the quadratic form $\sum_{i=0}^{n-1} X_{\bar{\imath}}\, X_i$, where $X_i = \e_i^*$.)  The torus $T\isom(\C^*)^n$ acts on $V$ with weights $-t_{n-1},\ldots,-t_0,t_0,\ldots,t_{n-1}$, inducing an action on $\QQ$.

The quadric $\QQ$ is homogeneous for $PSO_{2n}$, and the Satake correspondence identifies $H_T^*\QQ$ with $V_{\varpi_1}(-\varpi_1)[t]$, where $\varpi_1 = -t_{n-1}$ and $V_{\varpi_1}=V$ is the standard representation of $\lieso_{2n}$.  To see this explicitly, define Schubert varieties in $\QQ$ by
\begin{align*}
 \Omega_{\bar\imath} &= \{ \e^*_{\bar{n-1}} = \cdots = \e^*_{\bar{i+1}} = 0 \}, \\
 \Omega_{i} &= \{ \e^*_{\bar{n-1}} = \cdots = \e^*_{\bar{0}} = \e^*_0 = \cdots = \e^*_{i-1} = 0  \}
\end{align*}
for $i>0$; and
\begin{align*}
 \Omega_{\bar{0}} &= \{ \e^*_{\bar{n-1}} = \cdots = \e^*_{\bar{1}} = \e^*_{\bar{0}} = 0 \} \\
  \Omega_{0} &= \{ \e^*_{\bar{n-1}} = \cdots = \e^*_{\bar{1}} = \e^*_0 = 0  \}.
\end{align*}
Identify the Schubert classes $\sigma_i = [\Omega_i]$ in $H^T_{n-1-i}\QQ = H_T^{n-1+i}\QQ$ with basis elements $\e_i$ by
\begin{align}\label{e.quadric-schub}
  \sigma_{\bar\imath} = (-1)^i\, \e_{\bar\imath} \quad \text{and} \quad   \sigma_{i}  = \e_i,
\end{align}
for $i\geq 0$.
Using the twist by ${t_{n-1}}$, we have
\begin{align*}
  \xi_t\cdot \e_{\bar\imath} &= -\e_{\bar\imath+1} + (t_i-t_{n-1})\e_{\bar\imath} \qquad \text{for } i>1, \\
  \xi_t\cdot \e_{\bar{1}} &=  -\e_{\bar{0}} -\e_0 + (t_1-t_{n-1})\e_{\bar{1}} ,\\
  \xi_t\cdot \e_{\bar{0}} &= \e_1 + (t_0-t_{n-1})\e_{\bar{0}}, \quad\text{and} \\
  \xi_t\cdot \e_i &=  \e_{i+1} + (-t_i-t_{n-1})\e_i \qquad \text{for }i\geq 0.
\end{align*}
On the other hand, taking $\sigma_{\bx} = \sigma_{\bar{n-2}}\in H_T^1\QQ$ to be the hyperplane class,
\begin{align*}
  \sigma_{\bx}\cdot \sigma_{\bar\imath} &= \sigma_{\bar\imath+1} + (t_i-t_{n-1})\sigma_{\bar\imath} \qquad \text{for } i>1, \\
  \sigma_{\bx}\cdot \sigma_{\bar{1}} &=  \sigma_{\bar{0}} +\sigma_0 + (t_1-t_{n-1})\sigma_{\bar{1}} ,\\
  \sigma_{\bx}\cdot \sigma_{\bar{0}} &= \sigma_1 + (t_0-t_{n-1})\sigma_{\bar{0}}, \quad\text{and} \\
  \sigma_{\bx}\cdot \sigma_i &=  \sigma_{i+1} + (-t_i-t_{n-1})\sigma_i \qquad \text{for }i\geq 0,
\end{align*}
so \eqref{e.quadric-schub} compatibly identifies the action of $\xi_t$ with the product by $\sigma_{\bx}$.

Iterating the above computation of $\xi_t\cdot\e_i$ leads to a formula for the matrix entries of $\xi_t^{2j-1}$, for $j=1,\ldots,n-1$:
\begin{equation}\label{e.xit-power}
\begin{aligned}
  \xi_t^{2j-1} &= \sum_{k=0}^{n-1}\sum_{i=0}^{n-1-k} (-1)^{2j-1-k} h_{2j-1-k}(t_i,\ldots,t_{i+k})\, \e_{\bar\imath}\wedge\e_{i+k} \\
    &\quad + \sum_{k=1}^{n-1}(-1)^{2j-1-k} h_{2j-1-k}(-t_0,t_1,\ldots,t_{k})\, \e_0\wedge \e_{k} \\
    & \quad + 2 \sum_{i=1}^{n-2} \sum_{k=i+1}^{n-1} (-1)^{2j-1-k} h_{2j-1-i-k}(-t_{i},-t_{i-1},\ldots,-t_0,t_0,\ldots,t_{k}) \,\e_i\wedge \e_{k},
\end{aligned}
\end{equation}
where in the last sum, the complete homogeneous symmetric functions are in $i+k+2$ variables, specialized as indicated to consecutive $t$'s.

The fixed points in the quadric $\QQ$ are the $2n$ coordinate points:
\[
\QQ^T = \{p_{\bar{n-1}},\ldots,p_{\bar{0}}, p_0,\ldots,p_{n-1}.\}
\]
That is, $p_j\in \QQ \subseteq \P^{2n-1}$ is the point with $1$ in the $j$th coordinate and $0$ elsewhere.  For each $i$, one computes the weights of $T$ acting on the tangent space $T_{p_i}\QQ$ to be $\{ t_j - t_i \,|\, j \neq i, \bar\imath\}$, using the notation $t_{\bar\jmath} = -t_j$.

Using the defining equations, it is easy to write down the restrictions of Schubert classes to fixed points.  For $i>0$, and any $j$, we have
\begin{align*}
  \sigma_{\bar\imath}|_{p_j} &= (-t_{n-1}-t_j)\cdots(-t_{i+1}-t_j) \\
                &= \prod_{k=i+1}^{n-1} (-t_k-t_j),
\end{align*}
Thus $\sigma_{\bar\imath}|_{p_j} = 0$ for $j<\bar\imath$, since the factor $(-t_{\bar\jmath}-t_j)$ is zero---and indeed, in this case $p_j \not\in\Omega_{\bar\imath}$.

For $i=\bar{0}$, we have
\begin{align*}
 \sigma_{\bar{0}}|_{p_{\bar{0}}} &= \prod_{k=1}^{n-1}(-t_k+t_0) ;\\
 \sigma_{\bar{0}}|_{p_j} &= (t_0-t_j)\mathop{\prod_{k=0}^{n-1}}_{k\neq j}(-t_k-t_j) \quad \text{ for }j>0;
\end{align*}
and all other restrictions are zero.  Finally, for $i\geq 0$,
\begin{align*}
 \sigma_i|_{p_j} &= \left(\mathop{\prod_{k=0}^{n-1}}_{k\neq j}(-t_k-t_j)\right)\prod_{k=0}^{i-1}(t_k-t_j) \quad \text{ for }j\geq i,
\end{align*}
and all other restrictions are zero.

Finally, we translate these calculations into a change of bases for the representation $V$.  Using $\e_{\bar\imath} = (-1)^i\sigma_{\bar\imath}$ and $\e_i = \sigma_i$ for $i\geq0$, we have $[\QQ] = \sigma_{\bar{n-1}} = (-1)^{n-1}\e_{\bar{n-1}}$, so by setting $\barf_i = (-1)^{n-1} {\bf1}_i$, where ${\bf1}_i \in H_T^*\QQ^T$ is the idempotent class at $p_i$, we have
\[
  \e_{\bar{n-1}} = \barf_{\bar{n-1}} + \cdots + \barf_{\bar0} + \barf_0 + \cdots + \barf_{n-1}.
\]
More generally, for $i>0$ we have
\[
 \e_{\bar\imath} = \sum_{j=\bar\imath}^{n-1}\left( \prod_{k=i+1}^{n-1} (t_k+t_j) \right)\barf_j;
\]
for $i=\bar{0}$ we have
\[
 \e_{\bar0} = \left(\prod_{k=1}^{n-1}(t_k-t_0)\right) \barf_{\bar0} + \sum_{j=1}^{n-1}\left((t_j-t_0) \prod_{k>0, k\neq j}(t_k+t_j) \right)\barf_j;
\]
and for $i\geq 0$ we have
\[
  \e_i = \sum_{j=i}^{n-1} \left( \prod_{k=0}^{i-1} (t_k^2-t_j^2)  \mathop{\prod_{k=i}^{n-1}}_{k\neq j} (t_k+t_j) \right) \barf_j.
\]

It will be convenient to rescale the basis $\{\barf_i\}$ so that $\{\e_i\}$ is related by a unitriangular change of basis.  To this end, for each $i$ let
\[
  f_i = \alpha_i \barf_i,
\]
where, for $i\geq 0$, the scaling coefficients are $\alpha_{\bar\imath} = (-1)^{n-1-i}\sigma_{\bar\imath}|_{p_{\bar\imath}}$ and $\alpha_i = (-1)^{n-1}\sigma_i|_{p_i}$.  Now we may write, for $i\geq 0$,
\[
 \e_{\bar\imath} = \sum_{j=0}^i \bar{c}_{ji} f_{\bar\jmath} + \sum_{j=0}^{n-1} c_{ji} f_j,
\]
and
\[
 \e_i = \sum_{j=0}^i \bar{b}_{ji} f_j.
\]
Explicitly, the matrices $\bar{C} = (\bar{c}_{ji})$, $C = (c_{ji})$, and $\bar{B}=(\bar{b}_{ji})$ are computed as follows.  For $i>0$,
\begin{align*}
 \bar{c}_{ji} &= \frac{1}{\prod_{k=j+1}^i(t_k-t_j)} \quad \text{ for }0\leq j\leq i; \\
 c_{ji} &= \frac{1}{\prod_{k=0}^{j-1}(t_k^2-t_j^2)\prod_{k=j+1}^{i-1}(t_k+t_j)} \quad \text{ for }0\leq j\leq i;
 \intertext{and}
 c_{ji} &= \frac{2t_j}{\prod_{k=0}^i(t_k^2-t_j^2)\prod_{k=i+1}^{j-1}(t_k-t_j)} \quad \text{ for } j>i.
 \end{align*}
For $i=0$,
\begin{align*}
  \bar{c}_{00} &=1 ; \\
  c_{00} & = 0; \\
  \intertext{and}
  c_{j0} &= \frac{1}{(-t_0-t_j)\prod_{k=1}^{j-1}(t_k-t_j)}.
\end{align*}
Finally, for $i\geq 0$,
\begin{align*}
  \bar{b}_{ji} &= \frac{1}{\prod_{k=i}^{j-1}(t_k-t_j)}.
\end{align*}
Note the matrices $\bar{C}$ and $\bar{B}$ are indeed unitriangular.

\section{Orthogonal Grassmannians}\label{s.spinors}

Now we turn to the maximal orthogonal Grassmannians $OG^\pm(n,2n)$, also known as {\em spinor varieties}.  We will maintain the notation from the previous section, so $V=\C^{2n}$ has a bilinear form and basis $\e_i$ so that $\langle \e_{\bar\imath}, \e_j \rangle = \delta_{ij}$.  As noted above, the subspace $E\subseteq V$ spanned by 
$\e_0,\e_1,\ldots,\e_{n-1}$
is isotropic with respect to the bilinear form, as is the complementary subspace $\bar{E} = \Span\{\e_{\bar{0}},\ldots,\e_{\bar{n-1}}\}$.  The orthogonal Grassmannian $OG^+(n,2n)$ (respectively, $OG^-(n,2n)$) parametrizes all $n$-dimensional isotropic subspaces $L\subseteq V$ such that $\dim(E\cap L)$ is even (resp., odd).  We will focus on the ``$+$'' case, and write $OG(n) = OG^+(n,2n)$ from now on.

The torus $T=(\C^*)^n$ acts on $OG(n)$ via its action on $V=\C^{2n}$; recall that this is given by weights $-t_{n-1},\ldots,-t_0,t_0,\ldots,t_{n-1}$.  The $T$-fixed points in $OG(n)$ are indexed by subsets $I \subseteq \{0,\ldots,n-1\}$ such that the cardinality of $I$ is even.  For such a subset, the fixed point $p_I$ corresponds to the subspace
\[
 E_I = \Span\left( \{ \e_i \,|\, i\in I \} \cup \{ \e_{\bar\jmath} \,|\, j\not\in I\}\right).
\]
For example, $p_{\emptyset} = \bar{E}$.

Schubert varieties in $OG(n)$ are also indexed by subsets $I \subseteq \{0,\ldots,n-1\}$ of even cardinality.  As for the ordinary Grassmannian, the elements of $I$ index pivots for Schubert cells $\Omega_I^\circ$, and $\Omega_I$ is the closure.  The isotropicity conditions mean that exactly one of $i$ or $\bar\imath$ occurs as a pivot, for $0\leq i\leq n-1$, and we record the positive ones.  For example, in $OG(4)$ we have
\[
  \Omega^\circ_{\{1,3\}} = \left[\begin{array}{cccccccc}
  0 & 1 & * & 0 &   *     & 0 & \bullet & 0 \\
  0 & 0 & 0 & 1 & \bullet & 0 & \bullet & 0 \\
  0 & 0 & 0 & 0 &    0    & 1 & \bullet       & 0 \\
  0 & 0 & 0 & 0 &    0    & 0 & 0       & 1 \end{array}\right].
\]
(From left to right, the columns are numbered $\bar{3},\bar{2},\bar{1},\bar{0},0,1,2,3$.  Stars are free entries, and bullets indicate entries that are dependent on the others, by the isotropicity condition.)  Similarly, $\Omega_\emptyset = OG(n)$, and $\Omega_{\{(0),1,\ldots,n-1\}} = \{p_{\{(0),1,\ldots,n-1\}}\}$, where $0$ is included or not, depending on the parity of $n$.  Schubert varieties are $T$-invariant.

Frequently one interprets the subsets $I$ as {\it strict partitions} $\lambda$, simply by reversing order from increasing to decreasing.  In this context, we will usually prefer the subset notation to partition notation, although the latter is useful for indicating containment relations: if $I$ and $J$ are subsets corresponding to partitions $\lambda$ and $\mu$, respectively, then $\Omega_I \subseteq \Omega_J$ if and only if $\lambda\supseteq\mu$ as Young diagrams.  We will write $I\geq J$ in this case.  (If $I=\{i_1<\cdots<i_r\}$ and $J=\{j_1<\cdots<j_s\}$, then $I\geq J$ is equivalent to $r\geq s$ and $i_a\geq j_a$ for $1\leq a\leq s$.)  
We write $\sigma_I = [\Omega_I]$ for the equivariant class of a Schubert variety; it has degree $|I| := \sum i_a$.

Our main goal in this section is to compute formulas for the restrictions $\sigma_I|_{p_J}$. 
Since $p_I$ is the unique fixed point in the Schubert cell $\Omega_I^\circ$, we have $p_I \in \Omega_J$ if and only if $I\geq J$.  From matrix representatives, it is easy to see that the normal space to $\Omega_I^\circ \subset OG(n)$ at the point $p_I$ has weights $\{-t_i+t_j\,|\, i>j;\, i\in I, j\not\in I \} \cup \{ -t_i-t_j \,|\, i>j;\, i,j\in I\}$.  It follows that
\begin{equation}\label{e.OGrestrict}
  \sigma_I|_{p_I} = \prod_{i\in I}\left( \mathop{\prod_{j\not\in I}}_{j<i}(-t_i+t_j) \mathop{\prod_{j\in I}}_{j<i}(-t_i-t_j) \right).
\end{equation}

The corresponding minuscule representation is the {\it half-spin representation} $\spin^+$ of $\lieso_{2n}$.  A brief description, suitable for our purposes, is in the appendix; to see this worked out in detail, we recommend \cite[\S20]{fh}, \cite{chevalley}, or \cite{manivel}.

Recall our standard representation $V$ of $\lieso_{2n}$ splits into maximal isotropic subspaces $V=\bar{E}\oplus E$, and we have fixed a basis $\e_i$ so that $\e_{\bar{n-1}},\ldots,\e_{\bar0}$ span $\bar{E}$, and $e_0,\ldots,e_{n-1}$ span $E$.  As noted in the appendix, $\spin=\spin^+\oplus\spin^-$ is an ideal of the Clifford algebra $Cl(V)$, and $\spin^+$ has a basis of elements
\begin{equation}\label{e.spin-basis-e}
  \e_I := \e_{\bar\imath'_1}\cdots \e_{\bar\imath'_{n-r}} \cdot \e,
\end{equation}
for $I = \{i_1 <\cdots < i_r\}\subseteq \{0,\ldots,n-1\}$ of even cardinality, with complement $I' = \{i'_1 < \cdots < i'_{n-r} \}$.  Here $\e = \e_0\cdots \e_{n-1}$.

The Cartan subalgebra is spanned by $\e_{\bar\imath}\wedge \e_i$, and its eigenvalue on the weight vector $\e_I$ is computed to be
\[
  \frac{1}{2}\left(\sum_{j\not\in I}t_j - \sum_{i\in I} t_i \right).
\]
In particular, the highest weight vector $\e_\emptyset$ has weight $\varpi_n = \frac{1}{2}\sum_{j} t_j$.  Twisting by ${-\varpi_n}$, the action of $\liet^\vee$ on $V_{\varpi_n}(-\varpi_n)[t]$ has $t\cdot \e_I = (-\sum_{i\in I} t_i)\,\e_I$.  Straightforward computations also show
\begin{align*}
  \e_{\bar\imath} \wedge \e_{i+1} \cdot \e_I &= \e_{\bar\imath}\cdot \e_{i+1}\cdot \e_I \\
    &= \begin{cases} \e_{I^+} & \text{if } i\in I \text{ and }i+1\not\in I; \\ 0 & \text{otherwise;} \end{cases}
\end{align*}
here $I^+ = (I \setminus \{i\}) \cup \{i+1\}$.  Similarly, $\e_0 \wedge \e_1 \cdot \e_I = \e_{I \cup \{0,1\}}$ if $0,1\not\in I$, and is zero otherwise.

This is enough to compute the action of $\xi_t$ on $\spin^+$.  For example,
\begin{align*}
 \xi_t \cdot \e_{\emptyset} &= \e_{\{0,1\}}; \\
 \xi_t \cdot \e_{\{0,1\}} & = (\e_{\bar{1}}\wedge\e_2)\cdot \e_{\{0,1\}} - t\cdot \e_{\{0,1\}}  \\
                &= \e_{\{0,2\}} + (t_0+t_1)\,\e_{\{0,1\}}; \\
 \xi_t \cdot \e_{\{0,2\}} & = (\e_{\bar{0}}\wedge\e_1+\e_{\bar{2}}\wedge\e_3)\cdot \e_{\{0,2\}} - t\cdot \e_{\{0,2\}}  \\
               &= \e_{\{1,2\}} +\e_{\{0,3\}} + (t_0 + t_2)\, \e_{\{0,2\}}.
\end{align*}
The Schubert classes are identified by
\[
  \sigma_I = \e_I,
\]
so writing $\sigma_{\bx} = \sigma_{\{0,1\}}$ for the divisor class, the above calculation agrees with
\begin{align*}
 \sigma_{\bx} \cdot 1 &= \sigma_{\{0,1\}}; \\
 \sigma_{\bx} \cdot \sigma_{\{0,1\}} & = \sigma_{\{0,2\}} + (t_0+t_1)\,\sigma_{\{0,1\}}; \\
 \sigma_{\bx} \cdot \sigma_{\{0,2\}} & = \sigma_{\{1,2\}} +\sigma_{\{0,3\}} + (t_0 + t_2)\, \sigma_{\{0,2\}},
\end{align*}
and once again the action of $\xi_t$ corresponds to multiplication by $\sigma_{\bx}$.

Now we are ready for the equivariant Giambelli formula for orthogonal Grassmannians.  This says
\begin{align}\label{e.giambelliD}
  \sigma_I = P_\lambda(x|t),
\end{align}
where $\lambda$ is the strict partition corresponding to $I$ (i.e., write $I$ in decreasing order), and $P_\lambda(x|t)$ is the {\em factorial Schur P-function}.  These polynomials were studied by Ivanov \cite{ivanov}; setting $t=0$, they specialize to Schur's $P$-functions (see \cite[III.8]{macdonald}), which were shown to represent Schubert classes by Pragacz \cite{pragacz}.  The equivariant formula \eqref{e.giambelliD} was proved by Ikeda and Naruse \cite{in}.  We will see a new proof of this formula.

Ivanov gives a formula for $P_\lambda(x|t)$ as a ratio of Pfaffians, similar to the one defining $s_\lambda(x|t)$, and inspired by Nimmo's formula for $P_\lambda(x)=P_\lambda(x|0)$ \cite{nimmo}.  Let $A(x) = (a_{ij}(x))$ be the $n\times n$ skew-symmetric matrix with
\[
  a_{ij}(x) = \frac{x_i-x_j}{x_i+x_j},
\]
for $0\leq i,j \leq n-1$.  Given $I=\{i_1<\cdots<i_r\}$, let $B_I(x|t) = (b_{kl}(x|t))$ be the $n\times r$ matrix with
\[
  b_{kl}(x|t) = (x_k|t)^{i_l}
\]
Form the skew-symmetric matrix\footnote{The indexing most natural to our setup is slightly nonstandard.  The rows and columns of $A(x)$ are labelled $n-1,\ldots,0$ (left to right, top to bottom); similarly, the rows of $B_I(x|t)$ are labelled $n-1,\ldots,0$ (top to bottom) and its columns are labelled $i_1,\ldots,i_k$ (left to right).}
\[
  A_I(x|t) =  \left[\begin{array}{c|c}
A(x) & B_I(x|t) \\
\hline -B_I(x|t)^t & 0\end{array}\right].
\]
Then the Ivanov-Nimmo formula is
\begin{align}\label{e.ivanov}
  P_\lambda(x|t) = \frac{\Pf(A_I(x|t))}{\Pf(A(x))}.
\end{align}

By Schur's identity, the denominator is
\[
  \Pf(A(x)) = \prod_{i<j} \frac{x_i-x_j}{x_i+x_j}.
\]
Writing $\Pf_K(A)$ for the Pfaffian of the submatrix on any subset of rows and columns $K\subseteq [n]$, note that
\[
  \Pf_K(A(x)) = \prod_{k<k' \text{ in }K} \frac{x_k-x_{k'}}{x_k+x_{k'}}.
\]

Now we can state the theorem.  The setting is analogous to that for ordinary Grassmannians.  For each even subset $K\subseteq [n]$, there is a fixed point $p_K\in OG(n)$ and corresponding restriction homomorphism $H_T^*OG(n) \to \C[t]$.  On the other hand, there is a presentation of the cohomology in terms of symmetric functions, i.e., a surjective homomorphism $\C[t][x_0,\ldots,x_{n-1}]^{S_n} \to H_T^*OG(n)$, and composition with the fixed-point restriction to $p_K$ is the evaluation
\[
 \C[t][x_0,\ldots,x_{n-1}]^{S_n} \to \C[t]
\]
given by $x_i \mapsto -t_i$ if $i\in K$, and $x_i\mapsto 0$ if $i\not\in K$.

\begin{theorem}[Type D equivariant Giambelli formula \cite{in}]\label{t.giambelliD}
Under the homomorphism $\C[t][x_0,\ldots,x_{n-1}]^{S_n} \to H_T^*OG(n)$, we have $P_\lambda(x|t) \mapsto \sigma_I$, where $I$ is the subset corresponding to the strict partition $\lambda$.  Equivalently, for each $K=\{k_1<\cdots<k_r\}$, we have
\[
  \sigma_I|_{p_K} = P_\lambda(x|t)|_{x=-t_K},
\]
where the specialization $x=-t_K$ means $x_i \mapsto -t_i$ if $i\in K$, and $x_i \mapsto 0$ if $i\not\in K$.
\end{theorem}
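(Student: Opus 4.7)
The plan is to mimic the three-step structure of the type $A$ Giambelli proof in \S\ref{s.giambelli}, using the quadric $\QQ^{2n-2}$ and the explicit change-of-basis data of \S\ref{s.quadrics} in place of the projective-space step, and the half-spin representation in place of the exterior power.

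Step (1) has in fact already been carried out in \S\ref{s.quadrics}: on the standard representation $V = \C^{2n}$ we have the weight basis $\{\e_i\}$ coinciding up to sign with Schubert classes on $\QQ^{2n-2}$, the eigenbasis $\{f_i\}$ of $\xi_t$ coinciding with rescaled idempotents at the fixed points, and explicit (uni)triangular transition matrices $\bar B$, $\bar C$, $C$. For Step (2), take $\{\e_I\}$ from \eqref{e.spin-basis-e} as the basis of $\spin^+ = V_{\varpi_n}(-\varpi_n)[t]$. The computation of $\xi_t \cdot \e_I$ done earlier in this section already verifies that, under the identification $\sigma_I = \e_I$, the action of $\xi_t$ matches the equivariant Chevalley rule for multiplication by $\sigma_{\bx}$. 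Lemma~\ref{l.idempotent} then produces an eigenbasis $\{f_K\}$ for $\xi_t$ on $\spin^+$; normalize it by requiring $\e_\emptyset = \sum_K f_K$ (since $\e_\emptyset$ corresponds to $1 \in H_T^* OG(n)$), so that $f_K$ is identified with the idempotent class ${\bf 1}_K$.

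For Step (3), expand $\e_I$ in the $\{f_K\}$ basis. Since $\e_I$ is the Clifford product $\e_{\bar\imath'_1} \cdots \e_{\bar\imath'_{n-r}} \cdot \e_0 \cdots \e_{n-1}$, substituting the expansion of each $\e_{\bar\imath}$ and $\e_i$ in the $\{f_j\}$ basis (read off from \S\ref{s.quadrics}) and repeatedly applying the Clifford anticommutation relations produces an expansion whose coefficients are Pfaffians of submatrices of a single skew-symmetric matrix built from $\bar B$, $\bar C$, $C$. This is the spin-representation analogue of the determinants that arose in the type $A$ Step (3), and reflects the Wick-like expansion of a vacuum matrix element of a product of fermionic operators in $Cl(V)$.

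The main obstacle is to identify the resulting Pfaffian with the specific one in the Ivanov-Nimmo formula \eqref{e.ivanov}. The plan is to check that, under the specialization $x_i \mapsto -t_i$ for $i \in K$ (and $x_i \mapsto 0$ otherwise), the entries $\frac{x_i - x_j}{x_i+x_j}$ in the $A(x)$-block and $(x_k|t)^{i_l}$ in the $B_I(x|t)$-block reduce precisely to the relevant matrix entries extracted from $\bar B$, $\bar C$, $C$; the Pfaffian $\Pf(A(x))$ in the denominator of \eqref{e.ivanov} should then appear as the scalar required by the normalization $\e_\emptyset = \sum_K f_K$, in direct analogy with how the Vandermonde $\Delta_J$ entered the type $A$ normalization.
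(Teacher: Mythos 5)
Your overall plan coincides with the paper's proof: the same three-step template as in type~A, Step~(1) being the quadric computation of \S\ref{s.quadrics}, Step~(2) the Chevalley-rule verification on $\spin^+$, and Step~(3) an expansion of $\e_I$ in the $f$-basis whose coefficients are Pfaffians --- this is exactly what the Theorem of the Appendix supplies, as a Clifford-algebra analogue of the determinantal expansion of $\e_I$ in type~A.

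Where your proposal falls short is the final identification, which you leave as ``a plan to check.'' That check, as you state it, would actually fail: the matrix $\bar{C}^{-1}C$ coming out of \S\ref{s.quadrics} does \emph{not} have entries $\frac{t_j - t_i}{t_j + t_i}$, so its sub-Pfaffians do not ``reduce precisely'' to $\Pf_K(A(x))|_{x=-t_K}$. The missing ingredient is a conjugation by the diagonal matrix
\[
  S = \diag\!\bigl((t_0-t_{n-1})\cdots(t_{n-2}-t_{n-1}),\ \ldots,\ t_0-t_1,\ 1\bigr),
\]
which satisfies $S\,\bar{C}^{-1}C\,S = A(x)|_{x=-t}$. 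The key point is that the same rescaling by $S$ also converts the $\bar{B}_I$ block into $B_I(x|t)|_{x=-t}$, so that the factor $\det_K(S)$ appears \emph{both} in the numerator Pfaffian $\Pf_K(\bar{A}_I)\cdot\det_K(S) = \Pf\bigl(A_I(x|t)\bigr)|_{x=-t_K}$ and in the denominator $\Pf_K(\bar{C}^{-1}C)\cdot\det_K(S) = \Pf(A(x))|_{x=-t_K}$, and therefore cancels in the ratio \eqref{e.ivanov}. This is the correct type~D analogue of the type~A Vandermonde cancellation you invoke, but it is more delicate because the rescaling has to thread consistently through both blocks of the skew matrix; simply comparing entries without the conjugation by $S$ does not work. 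You also conflate the Clifford-product basis $f_K$ with the idempotent basis --- in the paper's notation these are $f_K$ and $\barf_K = \Pf_K(\bar{C}^{-1}C)\,f_K$, and the distinction matters since the normalization factor is precisely what becomes the denominator Pfaffian.

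Finally, note that the Appendix theorem requires the transition matrix between the $\e_i$ and $f_i$ bases to be \emph{unitriangular} in a specific block form; the computations at the end of \S\ref{s.quadrics} are arranged to produce exactly this (the matrices $\bar{B}$ and $\bar{C}$ are unitriangular), and that structure is what lets one pass from the raw transition data to the clean Pfaffian statement. Without explicitly invoking this Pfaffian theorem and the unitriangularity hypothesis, the ``Wick-like expansion'' you gesture at remains a heuristic rather than a proof.
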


\begin{proof}
We follow the same outline as in type A.  The first step has been done in the previous section, where we worked out the cohomology of quadrics and the corresponding change of basis between $\e_i$ and $f_i$.  The second step has been done above: we identify the spinor $\e_I \in \spin^+[t] = V_{\varpi_n}(-\varpi_n)[t]$ with the Schubert class $\sigma_I \in H_T^*OG(n)$.

It remains to compute the idempotent basis, and the expansion of $\e_I$ in this basis.  With $f_i$ as before, we define spinors $f_I$ by the same formula \eqref{e.spin-basis-e} defining $\e_I$:
\begin{equation}\label{e.spin-basis-f}
  f_I = f_{\bar\imath'_1} \cdots f_{\bar\imath'_{n-r}}\cdot f,
\end{equation}
where $f=f_0\cdots f_{n-1}$.  In fact, $f=\e$, since the change of basis is unitriangular.

Since $\e_\emptyset= \sigma_\emptyset = {\bf1}\in H_T^*OG(n)$, we compute this case first (to normalize the $\barf_I$ basis).  Using notation of the previous section,
\[
  \e_{\bar\imath} = \bar{C}\cdot f_{\bar\imath} + C\cdot f_i.
\]
Since $\bar{C}$ is unitriangular, if we multiply by its inverse and instead consider
\[
  \tilde\e_{\bar\imath} = f_{\bar\imath} + \bar{C}^{-1}C\cdot f_i,
\]
we have
\[
  \e_\emptyset = \e_{\bar0}\cdots \e_{\bar{n-1}}\cdot \e = \tilde\e_{\bar0}\cdots \tilde\e_{\bar{n-1}}\cdot \e
\]
in $\spin^+$.  On the other hand, now we are in the situation of the Theorem of the Appendix, which says
\[
 \e_\emptyset = \sum_{K} \Pf_K(\bar{C}^{-1}C) f_K.
\]

To compute these Pfaffians, it helps to introduce the $n\times n$ diagonal matrix
\[
  S = (t_0-t_{n-1})\cdots(t_{n-2}-t_{n-1}),\, (t_0-t_{n-1})\cdots(t_{n-3}-t_{n-2}),\, \cdots,\, t_0-t_1, 1).
\]
Then $S \bar{C}^{-1}CS = A = (a_{ji})$, where $a_{ji} = \frac{t_j-t_i}{t_j+t_i}$.  So for any $K\subseteq [n]$,
\begin{align*}
 \Pf_K(A) = \Pf_K(S\bar{C}^{-1}CS) = \det_K(S)\cdot\Pf_K(\bar{C}^{-1}C),
\end{align*}
where $\det_K(S)$ means the determinant of the submatrix on rows and columns $K$.  Combining the formulas
\[
  \det_K(S) = \mathop{\prod_{i<k}}_{k\in K}(t_i-t_k) \quad \text{ and } \Pf_K(A) = \mathop{\prod_{i<j}}_{i,j\in K} \frac{t_j-t_i}{t_j+t_i},
\]
we obtain
\[
  \Pf_K(\bar{C}^{-1}C) = \frac{1}{\displaystyle{\mathop{\prod_{i<k}}_{i,k\in K}} (-t_i-t_k) \displaystyle{ \mathop{\prod_{i<k}}_{i\not\in K,k\in K}} (t_i-t_k)}.
\]
Therefore, the idempotent classes $\barf_K$ are determined by writing
\[
 \e_\emptyset = \sum_K \barf_K,
\]
that is, by setting
\begin{align*}
  \barf_K &= \Pf_K(\bar{C}^{-1}C) f_K.
\end{align*}
Equivalently,
\begin{align*}
 f_K &=  \left(\mathop{\prod_{i<k}}_{i,k\in K} (-t_i-t_k)   \mathop{\prod_{i<k}}_{i\not\in K,k\in K} (t_i-t_k)\right) \barf_K.
\end{align*}

Now we compute $\e_I$.  Using notation from our computations at the end of \S\ref{s.quadrics}, the transition between $\e_i$ and $f_i$ has the form
\[
 \left[\begin{array}{c|c}
C & \bar{B} \\
\hline \bar{C} & 0\end{array}\right].
\]
Since the Theorem of the Appendix computes $\e_I$ as $(-1)^{|I|}\e_{i_1}\cdots\e_{i_r}\cdot\e_\emptyset$, we may replace $\e_{\bar\imath}$ by $\tilde\e_{\bar\imath}$, which amounts to using the matrix
\[
 \left[\begin{array}{c|c}
\bar{C}^{-1}C & \bar{B} \\
\hline w_\circ & 0\end{array}\right],
\]
where $w_\circ$ is the $n\times n$ matrix with $1$'s on the antidiagonal and $0$'s elsewhere.  
Now formula \eqref{e.thmA} says
\begin{equation}\label{e.eI}
 \e_I = \sum_{K} \Pf_K (\bar{A}_I) f_K,
\end{equation}
where
\[
 \bar{A}_I =  \left[\begin{array}{c|c}
\bar{C}^{-1}C & \bar{B}_I \\
\hline -\bar{B}_I^t & 0\end{array}\right]
\]
and $\bar{B}_I$ is the submatrix of $\bar{B}$ on columns $I$.

(Note that unitriangularity of the matrix relating $\e_i$ and $f_i$ implies that
\begin{align*}
  \e_I &= f_I + \sum_{K> I} \Pf_K(\bar{A}_I) f_K \\
  &= \left(\mathop{\prod_{j<i}}_{i,j\in I} (-t_j-t_i)   \mathop{\prod_{j<i}}_{j\not\in I,i\in I} (t_j-t_i)\right) \barf_I + \cdots ,
\end{align*}
which agrees with the formula \eqref{e.OGrestrict} for $\sigma_I|_{p_I}$.)

To conclude the proof, we must relate the coefficients $\Pf_K(\bar{A}_I)$ to the evaluations $P_\lambda(x|t)|_{x=-t_K}$.  Observe first that
\begin{align*}
 \Pf(A(x))|_{x=-t_K} &= \prod_{k'<k\text{ in }K}\frac{-t_{k'}+t_k}{-t_{k'}-t_k} \\
                     &=  \Pf_K(A) \\
                     &= \det_K(S)\Pf_K(\bar{C}^{-1}C).
\end{align*}
Furthermore, scaling the first $n$ rows and columns of $\bar{A}_I$ by $S$, one computes
\begin{align*}
  \left[\begin{array}{c|c}
S\bar{C}^{-1}CS & S\bar{B}_I \\
\hline -\bar{B}_I^tS & 0\end{array}\right]
&=   \left.\left[\begin{array}{c|c}
A(x) & B_I(x|t) \\
\hline -B_I(x|t)^t & 0\end{array}\right]\right|_{x_0=-t_0,\ldots,x_{n-1}=-t_{n-1}},
\end{align*}
and also that
\begin{align*}
 & \Pf_K \left( \left.\left[\begin{array}{c|c}
A(x) & B_I(x|t) \\
\hline -B_I(x|t)^t & 0\end{array}\right]\right|_{x_j=-t_j, \text{ all }j}\right) \\ &\qquad\qquad= \left.\left(\Pf \left[\begin{array}{c|c}
A(x) & B_I(x|t) \\
\hline -B_I(x|t)^t & 0\end{array}\right]\right)\right|_{x=-t_K}.
\end{align*}
Taking Pfaffians on rows and columns $K$, it follows that
\[
  \Pf_K(\bar{A}_I)\cdot \det_K(S) = \left.\left(\Pf \left[\begin{array}{c|c}
A(x) & B_I(x|t) \\
\hline -B_I(x|t)^t & 0\end{array}\right]\right)\right|_{x=-t_K}.
\]
So the Ivanov-Nimmo formula gives
\begin{align*}
 P_\lambda(x|t)|_{x=-t_K} &= \frac{\Pf_K(\bar{A}_I)}{\Pf_K(\bar{C}^{-1}C)},
\end{align*}
which is precisely the coefficient of $\barf_K = \Pf_K(\bar{C}^{-1}C) f_K$ in the expansion of $\e_I$ in \eqref{e.eI}.
\end{proof}

\excise{
{\tiny
\[
\left(\begin{array}{cccccccc}
\frac{-1}{(t_0^2-t_3^2)(t_1^2-t_3^2)(t_2^2-t_3^2)} &   &   &   &   &   &   &   \\
\frac{-1}{(t_0^2-t_2^2)(t_1^2-t_2^2)(t_3^2-t_2^2)} & \frac{-1}{(t_0^2-t_2^2)(t_1^2-t_2^2)(t_3+t_2)} &   &   &   &   &   &   \\
\frac{-1}{(t_0^2-t_1^2)(t_2^2-t_1^2)(t_3^2-t_1^2)} & \frac{-1}{(t_0^2-t_1^2)(t_2^2-t_1^2)(t_3+t_1)} & \frac{-1}{(t_0^2-t_1^2)(t_2+t_1)(t_3+t_1)} &   &   &   &   &   \\
\frac{-1}{(t_1^2-t_0^2)(t_2^2-t_0^2)(t_3^2-t_0^2)} & \frac{-1}{(t_1^2-t_0^2)(t_2^2-t_0^2)(t_3+t_0)} & \frac{-1}{(t_1^2-t_0^2)(t_2+t_0)(t_3+t_0)} & \frac{-1}{(t_1+t_0)(t_2+t_0)(t_3+t_0)} &   &   &   &   \\
\frac{-1}{(t_1^2-t_0^2)(t_2^2-t_0^2)(t_3^2-t_0^2)} & \frac{-1}{(t_1^2-t_0^2)(t_2^2-t_0^2)(t_3-t_0)} & \frac{-1}{(t_1^2-t_0^2)(t_2-t_0)(t_3-t_0)} & 0 & \frac{1}{(t_1-t_0)(t_2-t_0)(t_3-t_0)} &   &   &   \\
\frac{-1}{(t_0^2-t_1^2)(t_2^2-t_1^2)(t_3^2-t_1^2)} & \frac{-1}{(t_0^2-t_1^2)(t_2^2-t_1^2)(t_3-t_1)} & \frac{-1}{(t_0^2-t_1^2)(t_2-t_1)(t_3-t_1)} & \frac{-1}{(-t_0-t_1)(t_2-t_1)(t_3-t_1)} & \frac{1}{(t_0-t_1)(t_2-t_1)(t_3-t_1)} & \frac{1}{(t_2-t_1)(t_3-t_1)} &   &   \\
\frac{-1}{(t_0^2-t_2^2)(t_1^2-t_2^2)(t_3^2-t_2^2)} & \frac{-1}{(t_0^2-t_2^2)(t_1^2-t_2^2)(t_3-t_2)} & \frac{-2t_2}{(t_0^2-t_2^2)(t_1^2-t_2^2)(t_3-t_2)} & \frac{-1}{(-t_0-t_2)(t_1-t_2)(t_3-t_2)} & \frac{1}{(t_0-t_2)(t_1-t_2)(t_3-t_2)} & \frac{1}{(t_1-t_2)(t_3-t_2)} & \frac{1}{(t_3-t_2)} &   \\
\frac{-1}{(t_0^2-t_3^2)(t_1^2-t_3^2)(t_2^2-t_3^2)} & \frac{-2t_3}{(t_0^2-t_3^2)(t_1^2-t_3^2)(t_2^2-t_3^2)} & \frac{-2t_3}{(t_0^2-t_3^2)(t_1^2-t_3^2)(t_2-t_3)} & \frac{-1}{(-t_0-t_3)(t_1-t_3)(t_2-t_3)} & \frac{1}{(t_0-t_3)(t_1-t_3)(t_2-t_3)} & \frac{1}{(t_1-t_3)(t_2-t_3)} & \frac{1}{(t_2-t_3)} & 1\end{array}\right)
\]
}
}

\section{Rim hook rules for quantum cohomology}\label{s.rim-hook}

Consider a homogeneous space $X=G/P$ such that $H^2(X) \isom \C$.  (This includes all minuscule $G/P$.)  The (small) quantum cohomology ring of $X$ is a $\C[q]$-algebra $QH^*(X) = \C[q]\otimes H^*(X)$, where $q$ is a formal parameter whose degree depends on $X$, equipped with a product which deforms the usual cup product on $H^*(X)$; the structure constants in $QH^*(X)$ are {\it $3$-point Gromov-Witten invariants}, counting certain rational curves in $X$.  Using the action of a maximal torus $T\subseteq G$ on $X$, there are also equivariant versions $QH_T^*(X)$.  We will write $T = (\C^*)^n$, and
\[
 \htpt = \htpt_n = H_{T}^*(\pt) \isom \Z[t_1,\ldots,t_n],
\]
so $QH_T^*(X)$ is an algebra over $\htpt[q]$.

For any $N$, consider the embedding of Grassmannians
\[
 \iota\colon \Gr(k,N) \hookrightarrow \Gr(k,N+1)
\]
corresponding to the standard embedding of $\C^N$ in $\C^{N+1}$ (as the span of the first $N$ standard basis vectors).  Let us write $T_N=(\C^*)^N$, and $\htpt_N=H_{T_N}^*(\pt)$.  There is a similar embedding of $T_N$ in $T_{N+1}$ (corresponding to the map on character groups $\Z^{N+1} \to \Z^N$ sending $t_{N+1}$ to $0$ and all other $t_i$ to $t_i$).  The embedding $\iota$ is equivariant with respect to the inclusion of tori and their natural actions on the Grassmannians, so we have an inverse system of graded homomorphisms $\iota^*\colon H_{T_{N+1}}^*\Gr(k,{N+1}) \to H_{T_N}^*\Gr(k,N)$.  Let $H_{T_\infty}^*\Gr(k,\infty)$ be the graded inverse limit.  This ring can be identified with the ring of symmetric polynomials in variables $x_1,\ldots,x_k$, with coefficients in $\htpt_\infty$.

Writing $\Omega_\lambda^{(N)}$ for the Schubert variety in $\Gr(k,N)$ and $\sigma_\lambda^{(N)}$ for its equivariant cohomology class, one checks that $\iota^{-1}\Omega_\lambda^{(N+1)} = \Omega_\lambda^{(N)}$, and it follows that $\iota^*\sigma^{(N+1)}_\lambda = \sigma^{(N)}_\lambda$.  Therefore we have well-defined classes $\sigma_\lambda \in H_{T_\infty}^*\Gr(k,\infty)$.

Fixing $n$ and $T=T_n$, for $N>n$, consider a (different) inclusion of tori $T \hookrightarrow T_N$ given by
\[
  (z_1,z_2,\ldots,z_n) \mapsto (z_1,z_2,\ldots,z_n,z_1,z_2,\ldots);
\]
equivalently, take the map on character groups $\Z^N \to \Z^n$ given by $t_{i} \mapsto t_{i \pmod n}$, where the representatives mod $n$ are taken to be $1,\ldots,n$.  This also defines a ring homomorphism $\htpt_\infty \to \htpt_n$, inducing an algebra homomorphism $H_{T_\infty}^*\Gr(k,\infty) \to H_{T}^*\Gr(k,\infty)$.  We will see how to extend this to a homomorphism $H_{T}^*\Gr(k,\infty) \to QH_{T}^*\Gr(k,n)$.

The {\it rim hook rule}\footnote{The non-equivariant rim hook rule was discovered by Bertram, Ciocan-Fontanine, and Fulton during the 1996--7 program on quantum cohomology at Institut Mittag-Leffler \cite{bcff}.  Bertiger, Mili\'cevi\'c, and Taipale gave an equivariant generalization \cite{bmt}.} for the equivariant quantum cohomology of $\Gr(k,n)$ is a recipe for computing quantum products in terms of ordinary products on $\Gr(k,N)$, for $N>n$.  Given any partition $\lambda$, an \define{$n$-rim hook} (also called an \define{$n$-border strip}) is a connected collection of $n$ boxes on the southeast border of its diagram, with exactly one box in each diagonal, such that the complement $\mu$ is also the diagram of a partition.  If the boxes appear in rows $i$ through $j$ of the diagram, the \define{height} of the rim hook is $j-i$.  See Figure~\ref{f.rimhook1} for an example, and \cite[\S I.1]{macdonald} for more details.

Any partition $\lambda$ has a well-defined \define{$n$-core} $\mu$, which is the partition obtained by removing all possible $n$-rim hooks from $\lambda$, in any order.  The parity of the sum of the heights of these rim hooks is also independent of choices, so we can define $\epsilon(\lambda/\mu) = \sum_{\delta} \mathrm{height}(\delta) \pmod 2$, taking the sum over a pieces of a decomposition of the skew shape $\lambda/\mu$ into $n$-rim hooks $\delta$.

Under the bijection between partitions $\lambda$ with $k$ parts and $k$-element subsets $I$ of $\{1,2,\ldots\}$, the operation of removing an $n$-rim hook corresponds to replacing some element $i\in I$ with $i-n$ (and an $n$-rim hook exists only if there is an $i>n$ such that $i-n$ is not in $I$); the height of such a rim hook is the length of the permutation needed to sort the resulting set into increasing order.  Taking the $n$-core of $\lambda$ can be described as follows.  Write the elements of $I$ as $s_i n + r_i$, for $1\leq i\leq k$ and $1\leq r_i\leq n$.  Next consider the multiset of residues $\{r_i\}$.  For each $r$ that appears in this multiset, replace the $j$th occurrence of $r$ by $r+(j-1)n$, obtaining a set $\bar{I}$ of $k$ distinct integers with the same multiset of residues modulo $n$.  The $n$-core of $\lambda$ is the partition $\mu$ corresponding to $\bar{I}$, and the sign $(-1)^{\epsilon(\lambda/\mu)}$ is the sign of the permutation needed to sort $\bar{I}$ (see \cite[\S I.1, Ex.~8]{macdonald}).

For example, consider $\lambda = (7,6,3)$.  For $k=3$, this corresponds to the set $I=\{4,8,10\}$.  There are two ways to remove a $7$-rim hook: replace $10$ with $3$ (obtaining $I' = \{4,8,3\}$) or replace $8$ with $1$ (obtaining $I'=\{4,1,10\}$).  In the first case, the permutation which sorts $I'$ has length $2$, and in the second it has length $1$.  To find the $7$-core, write residues modulo $7$ to obtain $\{4,1,3\}$; this is sorted to $\bar{I}=\{1,3,4\}$ by a permutation of length $2$, so $\mu = (1,1)$ and $\epsilon(\lambda/\mu)$ is even.

\begin{figure}
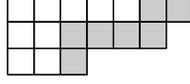



\pspicture(60,40)(-100,-10)

\psset{unit=1.00pt}

\whitebox(-50,10)
\whitebox(-40,10)
\whitebox(-30,10)
\whitebox(-20,10)
\whitebox(-10,10)
\graybox(0,10)
\graybox(10,10)

\whitebox(-50,0)
\whitebox(-40,0)
\graybox(-30,0)
\graybox(-20,0)
\graybox(-10,0)
\graybox(0,0)

\whitebox(-50,-10)
\whitebox(-40,-10)
\graybox(-30,-10)

\endpspicture


\caption{A $7$-rim hook of height $2$.\label{f.rimhook1}}

\end{figure}

Given a class $\sigma_\lambda\in H_{T_\infty}^*\Gr(k,\infty)$, for $\lambda = (\lambda_1\geq \cdots \geq \lambda_k \geq 0)$, let $\mu$ be the $n$-core of $\lambda$ and define a linear map
\begin{align*}
  \phi\colon H_{T_\infty}^*\Gr(k,\infty) &\to QH_{T}^*\Gr(k,n) \\
       \sigma_\lambda & \mapsto \begin{cases} (-1)^{(k-1)s+\epsilon(\lambda/\mu)}\,q^s\,\sigma_\mu & \text{ if } \mu \subseteq \rho_{k,n-k}, \\ 0 & \text{ otherwise,} \end{cases}
\end{align*}
where $s = (|\lambda|-|\mu|)/n$ is the number of rim hooks removed from $\lambda$ to obtain $\mu$.
This map factors into the specialization homomorphism $H_{T_\infty}^*\Gr(k,\infty) \to H_{T}^*\Gr(k,\infty)$ described above, followed by a linear map $\psi\colon H_{T}^*\Gr(k,\infty) \to QH_{T}^*\Gr(k,n)$ which is given by the same formula as $\phi$.  The first is clearly an algebra homomorphism (with respect to the ring map $\htpt_{\infty} \to \htpt$ which cyclically specializes the $t$ variables), while the second is {\it a priori} a homomorphism of $\htpt$-modules.

\begin{theorem}[Equivariant rim hook rule \cite{bcff,bmt}]\label{t.rimhook}
The map $\phi$ respects multiplication: it is a surjective homomorphism of algebras, compatible with the cyclic specialization $\htpt_\infty \to \htpt$.
\end{theorem}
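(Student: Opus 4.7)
The plan is to realize $\phi$ as (essentially) the $k$-th exterior power of a cyclic reduction, which makes the rim-hook sign and $q$-power transparent, and then to derive multiplicativity from the equivariant quantum Giambelli formula. Let $V_\infty = \bigoplus_{i\geq 1} \C\e_i$ and $V_n = \bigoplus_{i=1}^n \C\e_i$, and define
\[
  \tilde\pi\colon V_\infty\otimes\htpt_\infty \longrightarrow V_n\otimes\htpt[q], \qquad \e_i\mapsto \bigl((-1)^{k-1}q\bigr)^s\,\e_r \ \text{ where }\ i=sn+r,\ 1\leq r\leq n,
\]
combined with the cyclic specialization $t_i \mapsto t_{((i-1)\bmod n)+1}$ on scalars. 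On a wedge $\e_I = \e_{i_1}\wedge\cdots\wedge\e_{i_k}$, the $k$-th exterior power $\bigwedge^k\tilde\pi$ either vanishes (when two of the $i_a$ share a residue mod $n$, forcing a repeated wedge factor by antisymmetry) or yields a signed multiple of $q^s\e_{\bar I}$, where $\bar I$ is the set of residues; multiplying the factor $((-1)^{k-1})^s$ coming from $\tilde\pi$ against the sign produced by reordering the residues reproduces exactly $(-1)^{(k-1)s + \epsilon(\lambda/\mu)}$, by the combinatorial description of the $n$-core given just above the theorem. Thus $\bigwedge^k\tilde\pi$ agrees with $\phi$ on the Schubert basis identified via Satake, and one verifies a handful of small cases (e.g.\ $\lambda=(7,6,3)$, $n=7$, $k=3$) to confirm the signs.

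Multiplicativity is then reduced to a polynomial identity. By Section~\ref{s.giambelli}, $H_{T_\infty}^*\Gr(k,\infty)\cong \htpt_\infty[x_1,\ldots,x_k]^{S_k}$ via $\sigma_\lambda \mapsto s_\lambda(x|t)$, and by Mihalcea's equivariant quantum Giambelli there is a surjective $\htpt[q]$-algebra presentation $\Lambda_q\colon \htpt[q][x_1,\ldots,x_k]^{S_k} \twoheadrightarrow QH_T^*\Gr(k,n)$ sending $s_\mu(x|t)\mapsto \sigma_\mu$ for $\mu\subseteq \rho_{k,n-k}$. Since the cyclic specialization of the $t$-variables is already a ring homomorphism, $\phi$ is multiplicative once we verify the single identity
\[
  s_\lambda(x|t^{(n)}) \equiv (-1)^{(k-1)s+\epsilon(\lambda/\mu)}\, q^s\, s_\mu(x|t^{(n)})
\]
in $QH_T^*\Gr(k,n)$, for every $\lambda$, where $\mu$ is the $n$-core and $s=(|\lambda|-|\mu|)/n$. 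Surjectivity is automatic because $\phi(\sigma_\mu)=\sigma_\mu$ for $\mu\subseteq \rho_{k,n-k}$, and compatibility with the cyclic specialization is built into $\tilde\pi$ by construction.

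The main obstacle is the displayed identity. My plan is to induct on $s$, peeling off one $n$-rim hook at a time using a $q$-deformation of the top boundary relation $s_{(n-k+1,1^{k-1})}(x|t)\equiv 0$ (which in $QH_T^*\Gr(k,n)$ contributes a factor of $\pm q$), combined with a factorial Jacobi-Trudi expansion. The delicate step is the equivariant bookkeeping: the $t$-shifts introduced by Jacobi-Trudi must interact with the cyclic specialization $t^{(n)}$ so that each rim-hook removal contributes exactly one factor of $(-1)^{k-1}q$, with the heights accumulating modulo $2$ to $\epsilon(\lambda/\mu)$. Here the exterior-algebra picture from the first paragraph serves as a rigid consistency check: whatever sign emerges from the algebraic recursion is forced to match the sign from wedge reordering, so the final answer is pinned down in advance.
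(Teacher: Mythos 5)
Your first paragraph is right and matches the paper's setup exactly: $\psi$ is the $k$-th exterior power of the map $\e_{sn+r}\mapsto((-1)^{k-1}q)^s\e_r$ together with cyclic specialization of the $t$'s, the vanishing when two indices share a residue is forced by $\e_r\wedge\e_r=0$, and the sign $(-1)^{(k-1)s+\epsilon(\lambda/\mu)}$ is exactly the product of the $(-1)^{k-1}$'s introduced by $\tilde\pi$ with the reordering sign. That part is fine.

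The gap is in the multiplicativity argument. Reducing multiplicativity to the single identity $s_\lambda(x\mid t^{(n)})\equiv(-1)^{(k-1)s+\epsilon(\lambda/\mu)}q^s\,s_\mu(x\mid t^{(n)})$ in $QH_T^*\Gr(k,n)$ is logically correct, but this identity \emph{is} the theorem, repackaged: if you already know that $\Lambda_q$ sends $s_\mu(x\mid t)\mapsto\sigma_\mu$ (Mihalcea's equivariant quantum Giambelli) and that the proposed identity holds, you have proved the rim hook rule. So you have not isolated a simpler lemma; you have translated the statement. Your proposed induction on $s$ (peel off one rim hook at a time using a $q$-deformed top-boundary relation plus factorial Jacobi--Trudi) is where the content would have to live, and you acknowledge yourself that the ``delicate equivariant bookkeeping'' is left undone. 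As written, that step would require a careful analysis of how the $t$-shifts in the factorial Jacobi--Trudi expansion interact with the cyclic identification $t_i\equiv t_{i+n}$, and how a single rim-hook removal produces exactly one $(-1)^{k-1}q$ with the height accumulating correctly; none of this is verified, and the ``consistency check'' against the wedge picture does not substitute for a proof, since that picture only controls the linear map $\psi$, not products in $QH_T^*$.

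The paper's actual route avoids the identity entirely and is worth internalizing, because it is the cleaner use of the Satake/exterior-algebra picture you set up. Instead of proving what $\psi$ does to products directly, one shows that $\ker\psi$ is an \emph{ideal}: it suffices to check that multiplication by the $\htpt$-algebra generators $\xi_t,\xi_t^2,\ldots,\xi_t^k$ of $H_T^*\Gr(k,\infty)$ (whose leading terms are the power sums $p_j$) preserves the two kinds of spanning elements of $\ker\psi$. The explicit formula for $\xi_t^j\cdot\e_I$ (Example~\ref{ex.powers}) shows that after the cyclic specialization the coefficients $h_a(t_i,\ldots,t_{i+j-a})$ and $h_a(t_{i'},\ldots,t_{i'+j-a})$ coincide when $i\equiv i'\pmod n$, so the offending terms cancel after applying $\psi$. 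With $\ker\psi$ an ideal, the quotient is a ring with a Schubert-indexed basis, and Mihalcea's characterization of $QH_T^*\Gr(k,n)$ by the equivariant quantum Chevalley formula pins it down uniquely once one checks that $\xi_{q,t}$ reproduces that formula. This replaces an inductive polynomial identity by a one-step cancellation argument plus a citation to Mihalcea, which is exactly the kind of shortcut the Satake correspondence is supposed to buy.
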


It suffices to show that the second factor $\psi$ is a surjective ring homomorphism.  We will give a simple proof, inspired by the Satake correspondence.  (A similar construction was described by Gatto \cite{gatto}.  The phrasing in terms of reduction modulo $n$ also appears in work by Buch \cite[Corollary~1]{buch} and Sottile \cite{sottile}.)  The basic idea is to show that the kernel of $\psi$ is an ideal, so that $\psi$ induces a ring structure on its image; then apply Mihalcea's characterization of the quantum product \cite{mihalcea1} to conclude that the product induced by $\psi$ is the quantum product.

Before proving the theorem, we first consider the easier case of projective space, so $k=1$.  Writing a class in $H_{T}^*\P^{\infty}$ as $\sigma_{sn+i}$, for $0\leq i\leq n-1$, the rim hook rule says $\psi(\sigma_{sn+i}) = q^s\sigma_{i}$.  (In this case, $\psi$ is an isomorphism of $\htpt$-modules.)  Both $H_{T_n}^*\P^{\infty}$ and $QH_{T}^*\P^{n-1}$ are free $\htpt$-algebras, generated by the divisor class $\sigma_{\bx} = \sigma_1$, so to see that $\psi$ is an isomorphism of algebras it suffices to check that it respects multiplication by $\sigma_{\bx}$.  This is a simple application of a special case of the equivariant (quantum) Pieri rule \cite{mihalcea1}: in $H_{T}^*\P^{\infty}$ one has
\[
  \sigma_{\bx}\cdot \sigma_{i} = \sigma_{i+1} + (t_1-t_{(i+1)\pmod n})\,\sigma_{i},
\]
and in $QH_{T_n}^*\P^{n-1}$ one has
\[
  \sigma_{\bx}\cdot \sigma_{i} = \begin{cases} \sigma_{i+1} + (t_1-t_{i+1})\, \sigma_{i} & \text{ for } i<n-1, \\ q\,\sigma_{0} + (t_1-t_{n})\,\sigma_{n-1} & \text{ for } i=n-1. \end{cases}
\]

This computation can be rephrased to make it analogous to the one we did in the introduction.  There are homomorphisms with compatible identifications
\begin{equation}\label{e.qtwedge1}
\begin{tikzcd}
 H_{T_\infty}^*\P^\infty  \ar[r] \ar[d,equal] & H_{T}^*\P^{\infty}  \ar[r,"\psi"] \ar[d,equal] & QH_{T}^*\P^{n-1} \ar[d,equal] \\
 (\htpt_\infty)^\infty   \ar[r] & (\htpt)^{\infty} \ar[r,"\psi"] & (\htpt[q])^n,
\end{tikzcd}
\end{equation}
where the first horizontal map is given by the cyclic specialization $t_i \mapsto t_{i\pmod n}$ as above.  Identifying the standard basis $\e_i$ with $\sigma_{i-1}$ as before, the rim hook rule says $\psi$ is given by $\e_{sn+i} \mapsto q^s\e_i$, for $1\leq i\leq n$.  (Recalling that $\e_i$ is a weight vector with weight $t_i$, one sees that the cyclic specialization is necessary to make the second map compatible with the $S_n$-module structure.)

Multiplication by $\sigma_{\bx}$ on $H_{T_\infty}^*\P^\infty$ is given by the infinite matrix $\xi_t$ with $-t_1,-t_2,\ldots$ on the diagonal, $1$'s on the subdiagonal, and $0$'s elsewhere.  (One should twist $(\htpt_\infty)^\infty$ by $\ee^{-t_1}$ as in \S\ref{s.eq-satake} to get the correct action.)  Specializing the diagonal variables modulo $n$ and applying the homomorphism $\psi$, the action of $\xi_t$ on $(\htpt_\infty)^\infty$ is transformed into the action of the $n\times n$ matrix
\begin{equation}\label{e.xiqt}
 \xi_{q,t} = \left[\begin{array}{cccc}
-t_1 &  &  & q \\
1 & \ddots &  &  \\
0 & \ddots & \ddots &  \\
0 & 0 & 1 & -t_n\end{array}\right]
\end{equation}
on $(\htpt[q])^n$, which agrees with multiplication by $\sigma_{\bx}$ in $QH_{T}^*\P^{n-1}$.

\begin{lemma}
The $\htpt$-module homomorphism $\psi\colon H_{T}^*\Gr(k,\infty) \to QH_{T}^*\Gr(k,n)$ is surjective, and its kernel is an ideal.
\end{lemma}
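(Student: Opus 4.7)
The plan has two pieces: a quick surjectivity argument, plus a verification that $\ker\psi$ is an ideal that extends the $k=1$ picture of \eqref{e.qtwedge1} via $k$th exterior powers. For surjectivity, note that the target has $\htpt[q]$-basis $\{q^s\sigma_\mu \mid \mu\subseteq\rho_{k,n-k},\, s\geq 0\}$; for each such basis element, I would choose any partition $\lambda$ obtained from $\mu$ by attaching $s$ disjoint $n$-rim hooks (so that $\lambda$ has $n$-core $\mu$), giving $\psi(\sigma_\lambda)=\pm q^s\sigma_\mu$ and hence a preimage.

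For the kernel, I would identify $H_T^*\Gr(k,\infty)\cong\exterior^k V_\infty$ and $QH_T^*\Gr(k,n)\cong\exterior^k V_n^{(q)}$ via the Satake correspondence, with Schubert classes $\sigma_I$ corresponding to pure wedges $\e_I=\e_{i_1}\wedge\cdots\wedge\e_{i_k}$, where $V_\infty=\bigoplus_{i\geq 1}\htpt_\infty\,\e_i$ and $V_n^{(q)}=\bigoplus_{i=1}^n\htpt[q]\,\e_i$. Let $\pi\colon V_\infty\to V_n^{(q)}$ be the $k=1$ instance of $\psi$ from \eqref{e.qtwedge1}, namely $\pi(\e_{sn+i})=q^s\e_i$ for $1\leq i\leq n$. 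A key combinatorial check is that $\exterior^k\pi$ coincides with $\psi$ on the Schubert basis: this is the slogan that ``removing an $n$-rim hook is precisely reducing the indices of a pure wedge modulo $n$,'' and the sign $(-1)^{(k-1)s+\epsilon(\lambda/\mu)}$ in $\psi$ is exactly that incurred by sorting $\pi(\e_{i_1})\wedge\cdots\wedge\pi(\e_{i_k})$ into standard increasing order.

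By the observation following \eqref{e.xiqt}, the map $\pi$ intertwines $\xi_t$ with $\xi_{q,t}$ under the cyclic specialization $t_j\mapsto t_{j\bmod n}$. Hence $\exterior^k\pi$ intertwines their Leibniz actions on the wedge spaces, which by Theorem~\ref{t.equiv-satake} are precisely multiplication by $\sigma_\bx$ on each side. So $\psi$ respects $\sigma_\bx$-multiplication, and $\ker\psi$ is stable under $\sigma_\bx\cdot(-)$. To upgrade this to stability under the full algebra $H_T^*\Gr(k,\infty)$, I would invoke the equivariant Satake description of the ring action: $\Sym^*_{\htpt_\infty}\liea_t$ acts on $\exterior^k V_\infty$ through polynomials in the matrix powers $\xi_t^j$ extended by Leibniz, and Example~\ref{ex.powers} shows that $\xi_t^j$ corresponds to multiplication by the power sum $p_j$ in Chern roots. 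Since $p_1,\ldots,p_k$ generate $H_T^*\Gr(k,\infty)$ as an $\htpt_\infty$-algebra (over $\C$), and $\pi$ intertwines $\xi_t^j$ with $\xi_{q,t}^j$ for every $j$, the intertwining extends to multiplication by every element of $H_T^*\Gr(k,\infty)$, so $\ker\psi$ is an ideal.

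The main obstacle I expect is the quantum analogue of the ``$\xi_t^j\leftrightarrow p_j$'' correspondence: one must verify that the $\xi_{q,t}^j$-Leibniz operator on $\exterior^k V_n^{(q)}$ is genuine multiplication by a class in $QH_T^*\Gr(k,n)$, a statement not immediately covered by Theorem~\ref{t.equiv-satake}. This could be handled by a quantum Satake result (in the spirit of Golyshev--Manivel) or, more pedestrianly, by a direct comparison with Mihalcea's equivariant quantum Pieri formula for special Schubert classes. The sign bookkeeping identifying rim-hook heights with residue-sort inversions (compare \cite[I.1, Ex.~8]{macdonald}) is routine but must be done carefully.
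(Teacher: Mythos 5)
Your argument is correct, and it takes a genuinely different, more structural route than the paper's. The paper proves the lemma by brute force on the source side: it lists explicit $\htpt$-module generators of $\ker\psi$ (classes $\sigma_\lambda$ whose $n$-core escapes $\rho_{k,n-k}$, and differences $(-1)^{\epsilon(\lambda/\mu)}\sigma_\lambda - (-1)^{\epsilon(\lambda'/\mu)}\sigma_{\lambda'}$ for $\lambda,\lambda'$ with the same size and $n$-core), and then, using the explicit formula for $\xi_t^j\cdot\e_I$ from Example~\ref{ex.powers}, checks directly that multiplying each generator by $\xi_t^j$ ($1\leq j\leq k$) stays in $\ker\psi$, because the cyclic specialization makes the relevant coefficients $h_a(t_i,\ldots)$ and $h_a(t_{i'},\ldots)$ coincide when $i\equiv i'\bmod n$. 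You instead observe that $\psi$ is (up to a sign you need to handle) the $k$th exterior power of the $k=1$ map $\pi$, that $\pi$ intertwines $\xi_t$ with $\xi_{q,t}$, and therefore $\psi$ intertwines the Leibniz actions of $\xi_t^j$ and $\xi_{q,t}^j$; stability of $\ker\psi$ under the algebra generators follows formally. Both are valid; your route avoids case-by-case checking, while the paper's avoids having to worry precisely about how the twist and the sign $(-1)^{(k-1)s}$ interact.

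On that sign: you should define $\pi(\e_{sn+i}) = ((-1)^{k-1}q)^s\,\e_i$, not $\pi(\e_{sn+i}) = q^s\,\e_i$. Otherwise $\exterior^k\pi$ differs from $\psi$ by the factor $(-1)^{(k-1)s}$, and the corner entry of $\xi_{q,t}$ must be adjusted to $(-1)^{k-1}q$ accordingly — exactly as the paper remarks in the proof of Theorem~\ref{t.rimhook}. Once this is fixed, the identification $\psi = \exterior^k\pi$ on the Schubert basis (the ``mod $n$ reduction'' slogan, with $\epsilon(\lambda/\mu)$ coming from sorting residues) goes through.

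Finally, the ``main obstacle'' you raise is a false alarm for this lemma. To show $\ker\psi$ is an ideal, you only need $\psi(\xi_t^j\cdot x) = \xi_{q,t}^j\cdot\psi(x)$, where $\xi_{q,t}^j$ is treated as a bare linear operator on $\exterior^k(\htpt[q])^n$; you do \emph{not} need to know that this operator is multiplication by any class in $QH_T^*\Gr(k,n)$. Since $\xi_t^j$ corresponds (by Satake on the source side) to multiplication by the power sum $p_j$, and $p_1,\ldots,p_k$ generate $H_T^*\Gr(k,\infty)$ as an $\htpt_\infty$-algebra, the intertwining alone makes $\ker\psi$ an ideal. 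The question of whether $\xi_{q,t}^j$ acts as genuine quantum multiplication only arises in the proof of Theorem~\ref{t.rimhook}, where the paper sidesteps it by invoking Mihalcea's uniqueness characterization of $QH_T^*\Gr(k,n)$ via the equivariant quantum Chevalley rule. You have conflated the lemma with the theorem; your lemma argument is actually complete without any quantum Satake input.
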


\begin{proof}
It is straightforward to see that $\psi$ is surjective, and we leave this to the reader.  It is also easy to see that the kernel is generated (as an $\htpt$-module) by two types of elements:
\begin{enumerate}
\item classes $\sigma_\lambda$ such that the $n$-core of $\lambda$ does not fit in $\rho_{k,n-k}$; and

\smallskip

\item differences $(-1)^{\epsilon(\lambda/\mu)}\sigma_\lambda - (-1)^{\epsilon(\lambda'/\mu)}\sigma_{\lambda'}$, for two partitions $\lambda,\lambda'$ of the same size and with the same $n$-core $\mu$.
\end{enumerate}
We will show that the products of such elements with $\htpt$-algebra generators for $H_T^*\Gr(k,\C^\infty)$ are also in the kernel of $\psi$.

Making identifications
\begin{equation}\label{e.qtwedge2}
\begin{tikzcd}
H_{T}^*\Gr(k,\infty) \ar[r,"\psi"] \ar[d,equal] & QH_{T}^*\Gr(k,n) \ar[d,equal]\\
 \exterior^k_{\htpt}\htpt^{\infty} \ar[r,"\psi"] & \exterior^k_{\htpt[q]}(\htpt[q])^n,
\end{tikzcd}
\end{equation}
up to sign the homomorphism $\psi$ is induced by the corresponding map described above for projective space: if one sends $\e_{sn+i}$ to $((-1)^{k-1}q)^s\,\e_i$, then $\psi$ is the induced map on the $k$th exterior power.  Furthermore, from this point of view, the operators $\xi_t,\xi_t^2,\ldots,\xi_t^k$ ---or rather, the cohomology classes they correspond to---form a set of $\htpt$-algebra generators for $H_T^*\Gr(k,\infty)$.  (To see this, recall that $H_T^*\Gr(k,\infty)$ is isomorphic to the ring of symmetric polynomials in $k$ variables with coefficients in $\htpt$, and that the leading term of $\xi_t^j$ corresponds to multiplication by the power sum function $p_j$; these generate the ring of symmetric polynomials, for $1\leq j\leq k$.)

Now let us consider the generators of the kernel of $\psi$, using the correspondence between partitions and $k$-element sets to write classes in $H_T^*\Gr(k,\infty)$ as $\sigma_\lambda = \e_I$.  For $I=\{i_1<\cdots<i_k\}$, extract residues by writing $i_a = s_a n + r_a$.  The $n$-core of $\lambda$ fits inside $\rho_{k,n-k}$ if and only if the residues $r_a$ are all distinct, and two partitions $\lambda,\lambda'$ share the same $n$-core if and only if the corresponding sets $I,I'$ share the same multiset of residues.

Suppose $\e_I$ is the first type of generator for the kernel, so $I$ has at least two elements $i,i'$ with the same residue modulo $n$.  Reordering as necessary, we can write
\[
  \e_I = \pm (\e_{i}\wedge \e_{i'} \wedge \cdots ).
\]
Using the formula from Example~\ref{ex.powers},
\begin{align*}
  \xi_t^j\cdot (\e_{i}\wedge \e_{i'} \wedge \cdots ) &= \left(\sum_{a=0}^j (-1)^a\,h_a(t_{i},\ldots,t_{i+j-a})\,\e_{i+j-a}\right) \wedge \e_{i'} \wedge \cdots \\
    & \qquad + \e_i \wedge \left(\sum_{a=0}^j (-1)^a\,h_a(t_{i'},\ldots,t_{i'+j-a})\,\e_{i'+j-a}\right) \wedge \cdots \\
    & \qquad + (\text{ terms involving } \e_i \wedge \e_{i'} \wedge \cdots ) ;
\end{align*}
Due to the cyclic specialization, the subscripts on the $t$ variables should be read modulo $n$, and since $i\equiv i' \pmod n$, the coefficients $h_a(t_{i},\ldots,t_{i+j-a})$ and $h_a(t_{i'},\ldots,t_{i'+j-a})$ are equal.  After applying $\psi$, the first two terms cancel, and all the others go to zero.  It follows that $\xi_t^j\cdot \e_I$ also lies in the kernel.

The second case is similar.  Recall that the sign $(-1)^\epsilon(\lambda/\mu)$ is equal to the sign of the permutation needed to sort the residues of $I$ into increasing order.  Suppose $I$ and $I'$ have the same (distinct) residues modulo $n$, and are such that the correpsonding partitions $\lambda$ and $\lambda'$ have the same size.  Then
\[
  (-1)^{\epsilon(\lambda/\mu)}\,\e_I - (-1)^{\epsilon(\lambda'/\mu)}\,\e_{I'} = (\e_{s_1n+r_1}\wedge \cdots \wedge \e_{s_kn+r_k}) - (\e_{s'_1n+r_1}\wedge \cdots \wedge \e_{s'_kn+r_k}),
\]
for $r_1<\cdots<r_k$.  A calculation analogous to the previous one shows that the product of this difference with $\xi_t^j$ remains in the kernel of $\psi$.
\end{proof}

\begin{proof}[Proof of Theorem~\ref{t.rimhook}]
As remarked above, it suffices to show that $\psi$ is a ring homomorphism.  By the lemma, $\ker(\psi)$ is an ideal of $H_T^*\Gr(k,\infty)$.  Let $A=H_T^*\Gr(k,\infty)/\ker(\psi)$, and write $q\in A$ for the image of $(-1)^{k-1}\sigma_{n}$.  This makes $A$ an $\htpt[q]$-algebra, with an $\htpt[q]$-module basis indexed by partitions fitting inside $\rho_{k,n-k}$.  The homomorphism $\psi$ maps this basis onto the Schubert basis for $QH_T^*\Gr(k,n)$.  We must check that this is an isomorphism of algebras.

By Mihalcea's characterization of $QH_T^*\Gr(k,n)$ \cite[Corollary~7.1]{mihalcea1}, it suffices to check that the isomorphism respects multiplication by the divisor class $\sigma_1$.  On one hand, multiplication by $\sigma_1$ on $A$ is given by the action of a matrix $\xi_{q,t}$ on $\exterior_{\htpt[q]}^k(\htpt[q])^n\otimes \ee^{-\varpi_k}$.  (The matrix $\xi_{q,t}$ is the one of \eqref{e.xiqt}, with $q$ replaced by $(-1)^{k-1}q$.)  That is,
\begin{align*}
  \xi_{q,t}\cdot\e_I &= \e_{i_1+1}\wedge\e_{i_2}\wedge\cdots \wedge \e_{i_k} + \cdots + \e_{i_1}\wedge\e_{i_2}\wedge\cdots \wedge \e_{i_k+1} \\
   & \qquad + (t_1+\cdots+t_k-t_{i_1}-\cdots-t_{i_k})\,\e_I \\
   & \qquad (\,+\, q \e_1 \wedge \e_{i_1} \wedge \cdots \wedge \e_{i_{k-1}}),
\end{align*}
with the last term occurring only if $i_k=n$ and $i_1>1$.  Writing this in terms of partitions, one recovers exactly the equivariant quantum Pieri formula \cite[Theorem~1]{mihalcea1} for multiplication by $\sigma_1$ in $QH_T^*\Gr(k,n)$, as desired.\footnote{Our indexing of $t$ variables is reversed when compared with that of \cite{mihalcea1}, since we use opposite Schubert classes.}
\end{proof}

\section{Closing remarks and other directions}\label{s.closing}

\subsection{Relation to quantum integrability}
In \cite{gk,gks}, Gorbounov,  Korff, and Stroppel indicate an alternative approach to the (equivariant) rim-hook rule, based on quantum integrability of the six-vertex model. This model can be seen as a degeneration of the Yangian Hopf algebra (a certain quantum group) which governs the cohomology of the cotangent space of the Grassmannian. This fits in a major program initiated by Maulik and Okounkov to relate representation theory of quantum groups to enumerative geometry  \cite{mo}, \cite{o}, \cite{rsvz}.

The authors of \cite{gk,gks} consider a vector space $V=\C v_0\oplus \C v_1$.  The standard basis $\left\{v_{\omega_1}\otimes v_{\omega_1}\otimes \ldots \otimes v_{\omega_n}\right\}$ of $V^{\otimes n}$, where $\omega_i\in \left\{0,1\right\}$, can be identified with set of $01$-words $\left\{\omega=\omega_1\omega_2\ldots\omega_n\right\}$.  The set of words $\left\{\omega : \sum_{i=1}^{n}\omega_i=k\right\}$ is in bijection with the set of multi-indexes $I = \{i_1 < \cdots < i_k\} \subseteq [n]$.  The bijection is given by sending $I$ to the $01$-word with $1$'s in positions $i_1,\ldots,i_k$ and $0$'s elsewhere.  Through this identification, let $V_k\subseteq V^{\otimes n}$ be the vector subspace generated by vectors indexed by this set.  In \cite{gks}, two bases for $V_k\otimes \C[t_1,\ldots,t_n]$ are constructed, which are called respectively standard or spin basis\footnote{This ``spin basis'' is not directly connected to spin representations and the orthogonal Grassmannian, as far as we are aware.} and the Bethe vector basis. Then a connection is made with equivariant cohomology of the Grassmannian, by identifying those bases with the Schubert basis and the torus fixed point basis, respectively.  The authors construct certain (generating series of) operators denoted $A+qD$ (\cite[\S5]{gks}) acting on the spin basis which match the action of quantum multiplication by the divisor class on the Schubert basis.

This is analogous to the setup from the previous section, where the principal nilpotent $\xi_t$ is corrected by the nilpotent matrix
\[
q\left[
  \begin{array}{cccc}
0 &  &  & 1 \\
0 & \ddots &  &  \\
0 & \ddots & \ddots &  \\
0 & 0 & 0 & 0\end{array}
\right]
\]
in order to match the quantum equivariant Chevalley rule after a twist. 

We should also mention another closely related result by Lam and Templier, \cite[Proposition~4.13]{lt}  where essentially the same presentation of the quantum Chevalley operator is given.

\subsection{Mirror symmetry and the Gamma Conjectures}

Mirror symmetry for Fano varieties roughly claims a correspondence which associate to each Fano variety $X$ a smooth quasi-projective variety $Y$ endowed with  a
flat map $w:Y\longrightarrow \mathbb{A}^1$ with quasi-projective fibres, called superpotential. Under this correspondence critical points of $w$ give rise to vanishing cycles which through homological mirror symmetry correspond to objects of the derived category $D^b(X)$ of coherent sheaves on $X$.

The Gamma conjectures were formulated by Galkin, Golyshev and Iritani in \cite{ggi}, refining previous conjectures by Dubrovin \cite{dub}.  They claim that simple eigenvalues of a certain variation of the quantum Chevalley operator denoted $\star_0$ are related to characteristic classes $\hat{\Gamma}_X Ch(E)$, where $E$ is an exceptional object of $D^b(X)$, which in turn are constructed through flat sections of Dubrovin's quantum connection
\begin{equation}\label{qc}
\nabla_{z\partial_z}=z\frac{\partial}{\partial z}-\frac{1}{z}(c_1(X)\star_0)+\mu,
\end{equation}
a meromorphic flat connection on the trivial bundle $H^*(X)\times \mathbb{P}^1\rightarrow \mathbb{P}^1$. In (\ref{qc}) $\mu$ is the grading operator on $H^*(X)$ and $z$ is a local coordinate on $\mathbb{P}^1$.

The Gamma conjectures were proved for Grassmannians in \cite{ggi}, (see also \cite{cdg}), by making fundamental use of the quantum correction to the Satake correspondence.

\subsection{Other representations}
We have focused on minuscule representations in types A and D, with their corresponding (minuscule) homogeneous spaces $G/P$: Grassmannians, quadrics, and orthogonal Grassmannians.  It is natural to ask what can be said for other spaces.

In other types ($E_6$ and $E_7$), there is only one minuscule space, up to isomorphism, so the approach we used in Sections~\ref{s.giambelli} and \ref{s.spinors} does not seem productive.  Isomorphisms between quantum cohomology of these spaces and corresponding representations were worked out by Golyshev and Manivel \cite{gm}.  On the other hand, it would be interesting to see an analogue of the rim-hook rule for orthogonal Grassmannians, presumably related to an infinite-dimensional spin representation.

In a different direction, one can ask for the transition matrices between the ``$\e$'' and ``$\barf$'' bases of non-minuscule representations.  This should correspond to localization formulas for MV-cycles.  We have seen Schur $S$- and $P$-functions appear naturally in the exterior and spin representations; what other symmetric functions arise this way?

\appendix
\section*{Appendix: Pfaffians and spinors}
\setcounter{equation}{0}
\renewcommand{\theequation}{A.\arabic{equation}}

In this appendix, we prove a change-of-basis formula in the spin representation, where Pfaffians play the role analogous to determinants in the exterior algebra.  This refines similar formulas of Chevalley and Manivel \cite{chevalley,manivel}.

First, for any complex vector space $V$ with symmetric bilinear form $\langle\;,\;\rangle$, the associated \define{Clifford algebra} is the quotient of the tensor algebra by two-sided ideal forcing the relation $v\cdot w + w\cdot v = \langle v,w\rangle 1$ for all vectors $v,w\in V$:
\[
  Cl(V) := T^\bullet(V) / ( v\otimes w + w\otimes v - \langle v,w\rangle 1).
\]
(Note that this definition differs slightly from the standard one, where $\langle v,w \rangle 1$ would be replaced by $2\langle v,w \rangle 1$ --- but it is equivalent, up to rescaling the form $\langle \;,\; \rangle$, since we are not in characteristic $2$.)  If the bilinear form is zero, this is just the exterior algebra: $Cl(V) \isom \exterior^\bullet V$.  In general, $\dim Cl(V) = 2^{\dim V}$, with a basis consisting of products $v_I=v_{i_1}\cdots v_{i_k}$ of distinct basis elements of $V$. (One can prove this by degenerating to the exterior algebra.)

We note the following general formulas in $Cl(V)$, which are immediate from the defining relations.  First, if $x$ and $y$ are orthogonal vectors in $V$ (i.e., $\langle x,y \rangle = 0$), then
\begin{align}
  x\cdot y &= -y\cdot x
\end{align}
in $Cl(V)$.  Next, suppose $x = x_1\cdots x_r \in Cl(V)$ is a product of vectors in $V$ such that $X=\Span\{x_1,\ldots,x_r\} \subseteq V$ is isotropic.  Then for any vector $y\in X$,
\begin{align}
 y\cdot x = 0
\end{align}
in $Cl(V)$.

From now on, we assume $\dim V=2n$ and its bilinear form is nondegenerate.
Let $y_{\bar{n-1}},\ldots,y_{\bar{0}},y_0,\ldots,y_{{n-1}}$ be an ``orthogonal basis'', meaning $\langle y_{\bar\imath}, y_j \rangle = \delta_{ij}$.  (We interpret the bar as a negative sign, so $\bar{\bar\imath}=i$.)  Let $y=y_0\cdots y_{n-1}$.  The \define{spin representation} is the left ideal
\[
  \spin = Cl(V)\cdot y.
\]
A \define{(pure) spinor} is an element of the form $z\cdot y \in \spin$, where $z = z_1\cdots z_n$ is a product of vectors $z_1,\ldots,z_n \in V$ which span a maximal isotropic subspace of $V$.

Let $[m] = \{0,\ldots,m-1\}$ for any integer $m\geq 1$.  For a subset $I=\{i_1<\cdots<i_r\} \subseteq [n]$, let $I' = \{i'_1<\cdots<i'_{n-r}\} = [n]\setminus I$.  Given such an $I$, the corresponding standard spinor is
\[
  y_I := y_{\bar\imath'_1} \cdots y_{\bar\imath'_{n-r}}\cdot y.
\]
These form a basis for $\spin$, as $I$ ranges over all subsets of $[n]$.  Note that $y_\emptyset = y_{\bar{0}}\cdots y_{\bar{n-1}}\cdot y$, and
\begin{align}\label{e.alt-basis}
  y_{i_1}\cdots y_{i_r}\cdot y_\emptyset = (-1)^{i_1+\cdots+i_r} y_I,
\end{align}
which is sometimes useful.

The spin representation becomes an $\lieso_{2n}$-module via the embedding $\lieso_{2n} \isom \exterior^2 V \hookrightarrow Cl(V)$ by
\[
  v\wedge w \mapsto \frac{1}{2}(v\cdot w - w\cdot v) = v\cdot w - \frac{1}{2}\langle v,w \rangle 1.
\]
So $\lieso_{2n}$ preserves the parity of basis vectors of $Cl(V)$, and the spin representation breaks into two irreducible \define{half-spin representations}
\[
  \spin = \spin^+ \oplus \spin^-.
\]
Our convention will be that $\spin^+$ is spanned by standard spinors $y_I$, with $I$ an even subset of $[n]$, i.e., it has even cardinality.

Now let $x_{\bar{n-1}},\ldots,x_{\bar{0}},x_0,\ldots,x_{{n-1}}$ be another orthogonal basis, and assume it is related to the $y_i$'s by a unitriangular matrix:
\begin{align*}
 x_{\bar\imath} &= y_{\bar\imath} + \sum_{j<i} \bar{c}_{ji} y_{\bar\jmath} + \sum_{j=0}^{n-1} c_{ji} y_j \\
 \intertext{and}
 x_i &= y_i + \sum_{j>i} b_{ji}y_j.
\end{align*}
Thus $x=x_0\cdots x_{n-1}$ is equal to $y=y_0\cdots y_{n-1}$.

We define pure spinors $x_I$ analogously, by
\[
  x_I := x_{\bar\imath'_1} \cdots x_{\bar\imath'_{n-r}}\cdot y.
\]
Our goal is to compute the expansion of $x_I$ in the basis $y_K$.

In fact, we will be free to multiply by the inverse of the matrix $\bar{C}=(\bar{c}_{ji})$ and assume that $x_{\bar\imath}$ is written
\begin{align*}
  x_{\bar\imath} &= y_{\bar\imath}  + \sum_{j=0}^{n-1} a_{ji} y_j.
\end{align*}
In this case, the fact that the $x_{\bar\imath}$ span an isotropic space is equivalent to the fact that the matrix $A=(a_{ji})$ is skew-symmetric.  Up to appropriately indexing rows and columns, the $x$ basis is related to the $y$ basis by a matrix of the form
\[
\left(\begin{array}{c|c}
A & B \\
\hline w_\circ & 0\end{array}\right),
\]
where $w_\circ$ is the matrix with $1$'s on the antidiagonal and zeroes elsewhere.  The top $n$ rows determine a skew-symmetric matrix (by replacing $w_\circ$ with $-B^t$).  For a subset $I\subseteq [n]$, let $B(I)$ be the submatrix formed by taking columns $I$ of $B$, and let $A(I)$ be the skew-symmetric matrix
\[
A(I) = \left(\begin{array}{c|c}
A & B(I) \\
\hline -B(I)^t & 0\end{array}\right).
\]
Only the top $n$ rows are needed to perform operations with such a matrix.  For instance, if $n=3$ and $I=\{1\}$, the top rows are
\[
 \left(\begin{array}{ccc|ccc}
 0 & a_{21} & a_{20} & b_{20} & b_{21} & 1 \\
 & 0 & a_{10} & b_{10} & 1 & 0 \\
 &  & 0 & 1 & 0 & 0\end{array}\right)
\]
and
\[
A(I) = \left(\begin{array}{ccc|c}0 & a_{21} & a_{20} & b_{21} \\
 -a_{21} & 0 & a_{10} & 1 \\
  -a_{20} & -a_{10} & 0 & 0 \\ \hline
  -b_{21} & -1 & 0 & 0\end{array}\right)
\]

For even $r$, the \define{Pfaffian} of any $r\times r$ skew-symmetric matrix $A$ may be computed recursively using the Laplace-type expansion formula
\[
  \Pf(A) = \sum_{j=1}^{r-1} (-1)^{j-1} a_{jr}\Pf_{\hat{j,r}}(A),
\]
where $\Pf_{\hat{j,r}}(A)$ is the Pfaffian of the submatrix of $A$ obtained by removing the $j$th and $r$th rows and columns.  Let $\Pf_K(A)$ denote the Pfaffian of the submatrix on rows and columns $K$, and we always use the convention $\Pf_\emptyset(A)=1$.

\begin{thm*}
For a subset $I\subseteq [n]$, we have
\begin{equation}\label{e.thmA}
  x_I = \mathop{\sum_{K\subseteq [n]}}_{K \text{ even}} \Pf_K(A(I))\, y_K,
\end{equation}
where the sum is over subsets $K$ of even cardinality.
\end{thm*}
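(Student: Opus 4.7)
The plan is to establish the formula in two stages: first the base case $I = \emptyset$, which is a classical pure-spinor formula, then bootstrap to general $I$ via the multiplicative identity $x_I = \pm\, x_{i_1}\cdots x_{i_r}\cdot x_\emptyset$.

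For the base case, I would prove $x_\emptyset = \sum_{K \text{ even}} \Pf_K(A)\, y_K$ by direct expansion of $x_{\bar 0}\, x_{\bar 1}\cdots x_{\bar{n-1}}\cdot y$ using $x_{\bar\imath} = y_{\bar\imath} + \sum_j a_{ji}y_j$. Each term is labeled by the subset $K\subseteq [n]$ of factors where the $\sum_j a_{ji}y_j$ piece is chosen rather than $y_{\bar\imath}$; using the Clifford relations $y_{\bar\imath}y_j + y_j y_{\bar\imath} = \delta_{ij}$, the isotropy of each half, and the skew-symmetry of $A$, the coefficient of $y_K$ collapses to the signed sum over perfect matchings of $K$, which is $\Pf_K(A)$ by definition. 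This is Chevalley's classical formula for the pure spinor of a maximal isotropic subspace parametrized by a skew-symmetric matrix. For the reduction to general $I$, the argument of \eqref{e.alt-basis} applies verbatim to the orthogonal $x$-basis (using $x_0\cdots x_{n-1} = y$, which follows from unitriangularity since all other terms in the expansion of $\prod(y_i + \sum_{j>i}b_{ji}y_j)$ contain a repeated $y_j$), yielding
\[
  x_{i_1}\cdots x_{i_r}\cdot x_\emptyset = (-1)^{i_1+\cdots+i_r}\, x_I.
\]

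Substituting the base-case formula, the remaining task is to compute $x_{i_1}\cdots x_{i_r}\cdot y_K$ for each even $K$ and to identify the coefficient of each $y_{K_1}$ on the right with $(-1)^{\sum i_l}\Pf_{K_1}(A(I))$. The action of $x_i = y_i + \sum_{j>i}b_{ji}y_j$ on $y_K$ is concrete: one verifies $y_j\cdot y_K = 0$ for $j\in K$, while $y_j\cdot y_K = \pm\, y_{K\cup\{j\}}$ otherwise, the sign determined by the position of $j$ within the complement of $K$ in $[n]$. Iterating produces a sum indexed jointly by $K$ and by a choice of partner $j_l\in [n]\setminus K$ for each $i_l\in I$, which precisely parallels the Pfaffian Laplace-type expansion of $\Pf_{K_1}(A(I))$ along the $|I|$ rows and columns contributed by the $B(I)$ block.

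The main difficulty is the combinatorial identification of this Clifford-algebra sum with the Pfaffian, with all signs matching. Signs come from three sources---anticommutations in the Clifford algebra, the ordering of the factors $x_{i_1}\cdots x_{i_r}$, and the signed-matching definition of the Pfaffian---and must conspire. An alternative that sidesteps most of the bookkeeping is to exploit the fact that pure spinors are determined up to scalar by their annihilators: the right-hand side of the claimed formula, being annihilated by the maximal isotropic subspace $M'_I = \Span\{x_{\bar\imath'_k}\} + \Span\{x_{i_l}\}$, is automatically a pure spinor proportional to $x_I$; one then only needs to compare the coefficient of a single basis vector (such as $y_I$), which reduces to a single tractable Pfaffian computation.
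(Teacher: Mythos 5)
Your main line of attack---establish the $I=\emptyset$ case first, then reduce general $I$ to it via $x_{i_1}\cdots x_{i_r}\cdot x_\emptyset = (-1)^{i_1+\cdots+i_r}x_I$ (the analogue of~\eqref{e.alt-basis} for the $x$-basis, valid because $x=y$)---is exactly the structure of the paper's proof. The paper organizes the base case as an induction on $m$ with a Laplace-type Pfaffian expansion at each step, whereas you phrase it as a single Wick-style expansion over subsets $K$ of chosen ``correction'' factors; these are the same computation presented differently, and both require tracking the three sources of sign you identify. Likewise the paper handles general $I$ by induction on $r$, absorbing one more factor $x_i$ at a time and again invoking the Laplace expansion of $\Pf_K(A(I\cup\{i\}))$---precisely the ``joint indexing by $K$ and partners $j_l$'' you describe.

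The alternative you sketch at the end is genuinely different from the paper and would be elegant, but as stated it has a gap: you assert that the right-hand side $\sum_K \Pf_K(A(I))\,y_K$ is ``automatically'' annihilated by $M'_I = \Span\{x_{\bar\imath'_k}\}+\Span\{x_{i_l}\}$, and then use uniqueness of the pure spinor with a given annihilator. But annihilation by $M'_I$ is exactly what needs proving, and it is not automatic from the definition of the Pfaffian coefficients. Writing out $x_{\bar\imath}\cdot\bigl(\sum_K \Pf_K(A(I))\,y_K\bigr)$ for $i\in I'$, the vanishing amounts to a family of linear relations among Pfaffians of submatrices of $A(I)$ of adjacent sizes, which is essentially the same Laplace-expansion bookkeeping you were hoping to avoid. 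So the alternative route trades one Pfaffian computation for another; it would be a viable reorganization only if you supply an independent argument (for example, a symmetry or generating-function argument) for the annihilation, which the proposal does not yet contain.
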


See also \cite[Corollary 2.4]{okada} for a related Pfaffian identity.

\begin{proof}
First we establish the formula for $x_\emptyset$.  In fact, for any $m\leq n$, we have
\[
  x_{\bar{0}}\cdots x_{\bar{m-1}}\cdot y = \mathop{\sum_{K\subseteq [m]}}_{K \text{ even}} \Pf_K(A)\, y_{\bar{k}'_1}\cdots y_{\bar{k}'_s}\cdot y,
\]
where $K' = \{k'_1<\cdots<k'_s\} = [m]\setminus K$.  This is proved by induction on $m$, the base case $m=0$ being the tautology $y=y$.  For the induction step, we compute using the Laplace expansion formula:
\begin{align*}
  x_{\bar{0}} \cdots x_{\bar{m}}\cdot y &= (-1)^m x_{\bar{m}} \cdot x_{\bar{0}} \cdots x_{\bar{m-1}}\cdot y \\
  &= (-1)^m \left( y_{\bar{m}} + \sum_{j=0}^{m-1} a_{jm} y_j \right) \sum_{K \subseteq [m]} \Pf_K(A) y_{\bar{k}'_1} \cdots y_{\bar{k}'_r} \cdot y \\
  &= \sum_{K \subseteq [m]} \Pf_K(A) y_{\bar{k}'_1} \cdots y_{\bar{k}'_r}\cdot y_{\bar{m}} \cdot y \\
  & \quad + \sum_{j=0}^{m-1}\sum_{K \subseteq [m]} (-1)^m a_{jm}\Pf_K(A) y_{j}\cdot y_{\bar{k}'_1} \cdots y_{\bar{k}'_r} \cdot y.
\end{align*}
Note
\[
 y_j\cdot y_{\bar{k}'_1} \cdots y_{\bar{k}'_r} \cdot y = \begin{cases} 0 &\text{if }j\in K; \\ (-1)^{a-1} y_{\bar{k}'_1} \cdots \hat{y_{\bar{k}'_a}}\cdots y_{\bar{k}'_r} \cdot y &\text{if }j=k'_a \in K'. \end{cases}
\]
So the second sum above becomes
\[
  \sum_K \sum_{j\not\in K} (-1)^{m+a-1} a_{jm} \Pf_K(A) y_{\bar{k}''_1} \cdots y_{\bar{k}''_{r-1}} \cdot y,
\]
where $K'' = K'\setminus j$.  Combining the two and using Laplace expansion yields the claimed formula for $x_\emptyset$.

The general case is similar.  Using \eqref{e.alt-basis}, it is equivalent to show
\[
 x_{i_1}\cdots x_{i_r}\cdot x_\emptyset = (-1)^{i_1+\cdots+i_r} \mathop{\sum_{K\subseteq [n]}}_{K \text{ even}} \Pf_K(A(I))\, y_K,
\]
and we do this by induction on $r$.  Let $I=\{i_1<\cdots<i_r\}$ and consider $i>i_r$.  We compute:
\begin{align*}
  x_{i_1} \cdots x_{i_r} \cdot x_i \cdot x_\emptyset &= (-1)^{r} x_{i} \cdot x_{i_1} \cdots x_{i_r}\cdot x_\emptyset \\
  &= (-1)^r \left( \sum_{j} b_{ji} y_j \right)(-1)^{i_1+\cdots+i_r} \sum_{K} \Pf_K(A(I)) y_K \\
  &= (-1)^{r+i_1+\cdots+i_r} \sum_{j}\sum_{K \not\ni j}  b_{ji}\Pf_K(A(I)) y_{j}\cdot y_K \\
  &= \cdots \\
  &= (-1)^{i+i_1+\cdots+i_r} \sum_K \Pf_K( A(I\cup\{i\}) ) y_K,
\end{align*}
as desired.
\end{proof}

\end{document}